\allowdisplaybreaks\allowdisplaybreaks[4]
\newtheorem{prob}{Problem}
\newtheorem{thm}{Theorem}
\newtheorem{lem}{Lemma}
\newtheorem{cor}{Corollary}
\theoremstyle{definition}
\newtheorem{remark}{Remark}
\newtheorem{exa}{Example}
\begin{document}
\title{Infinite classes of strongly regular graphs derived from $GL(n,F_2)$\footnote{This work is supported
by NSFC Grant No. 11671344.}}
\author{{\small Lu Lu, \ \ Qiongxiang Huang\footnote{
Corresponding author.}, \ \ Jiangxia Hou\setcounter{footnote}{-1}\footnote{Email: lulu549588@hotmail.com (L. Lu), huangqx@xju.edu.cn (Q. Huang), jxhou@xju.edu.cn (J. Hou)}}\\[2mm]\scriptsize
College of Mathematics and Systems Science,
\scriptsize Xinjiang University, Urumqi, Xinjiang 830046, P.R.China}
\date{}
\maketitle {\flushleft\large\bf Abstract} It is known that the automorphism group of the elementary abelian $2$-group $Z_2^n$ is isomorphic to the general linear group $GL(n,F_2)$ of degree $n$ over $F_2$. Let $W$ be the collection of permutation matrices of order $n$. It is clear that $W\le GL(n,F_2)$. In virtue of this, we consider the Cayley graph $Cay(Z_2^n,S)$, where $S$ is the union of some orbits under the action of $W$. We call such graphs the orbit Cayley graphs over $Z_2^n$. In this paper, we give eight infinite families of strongly regular graphs among orbit Cayley graphs over $Z_2^n$, in which six families are new as we know. By the way, we formulate the spectra of orbit Cayley graphs as well.\begin{flushleft}
\textbf{Keywords:} Cayley graph; Strongly regular graph; Vector space; Galois field
\end{flushleft}
\textbf{AMS subject classifications:} 05A19; 05C50; 05C25
\section{Introduction}
Let $\Gamma$ be a graph with vertex set $V=\{v_1,\ldots,v_n\}$. The \emph{adjacency matrix} of $\Gamma$, denoted by  $A(\Gamma)=(a_{i,j})_{n\times n}$, is the $n\times n$ matrix such that the $a_{i,j}=1$ if $v_i\sim v_j$ and $0$ otherwise. The eigenvalues of $A(G)$ are called the \emph{eigenvalues} of $\Gamma$ and the multiset of such eigenvalues together with their multiplicities is called the \emph{spectrum} of $\Gamma$, denoted by $\mathrm{Spec}(\Gamma)$. The \emph{distance} $d(v_i,v_j)$ (or $d_{ij}$ for short) between two vertices $v_i$ and $v_j$ is the length of a shortest path from $v_i$ to $v_j$. The largest distance in $\Gamma$ is the \emph{diameter} of $\Gamma$, denoted by $d(\Gamma)$, that is, $d(\Gamma)=\max\{d(v_i,v_j)\mid v_i,v_j\in V\}$. For $1\le l\le d(\Gamma)$, let $N^{(l)}(v_i)=\{v_j\mid d(v_i,v_j)=l\}$. Therefore, $V=N^{(1)}(v_i)\cup \cdots\cup N^{(d(\Gamma))}(v_i)$ is a partition of $V$ for any vertex $v_i$. Particularly, we alway write $N(v_i)$ for $N^{(1)}(v_i)$, which is just the neighborhood of $v_i$. Moreover, the size of $N(v_i)$ is the \emph{degree} of $v_i$, denoted by $d(v_i)$. A bijection $\sigma$: $V\rightarrow V$ is an \emph{automorphism} of $\Gamma$ if $v_i^{\sigma}\sim v_j^{\sigma}$ if and only if $v_i\sim v_j$. The set of all automorphisms forms the \emph{automorphism group} of $\Gamma$, denoted by $Aut(\Gamma)$. The graph $\Gamma$ is \emph{vertex transitive} if for any vertex $v_i$ there exists $\sigma\in Aut(\Gamma)$ such that $v_1^{\sigma}=v_i$.

 In 1970, Doob \cite{Doob} started the investigation of graphs with few distinct eigenvalues. Such graphs has been proved to possess nice combinatorial properties and rich structures \cite{Dam}. It is clear that the connected graphs with exactly two distinct eigenvalues are complete graphs. However, it is far from been solving to characterize the graphs with exactly three distinct eigenvalues. The non-regular graphs with three distinct eigenvalues are studied by many mathematicians, see, for example, \cite{Dam} and \cite{Koolen}. A connected regular graph with exactly three distinct eigenvalues must be a so called strongly regular graph \cite{Godsil}. A \emph{strongly regular graph} with parameters $(n,r,\lambda,\mu)$ is an $r$ regular graph on $n$ vertices in which any two adjacent vertices have exactly $\lambda$ common neighbors and any two non-adjacent vertices have exactly $\mu$ common neighbours. The characterization of strongly regular graphs is a classical problem in algebraic graph theory which has caused wide public attention and numerous results are obtained, especially, on the strongly regular Cayley graphs, we refer the reader to \cite{Calderbank, Feng, Ge, Momihara} and references therein. Let $G$ be a group and $S$ a subset of $G$. The \emph{Cayley graph} $X=Cay(G,S)$ is the graph such that $V(X)=G$ and two vertices $x$ and $y$ are adjacent if $x^{-1}y\in S$. If $S$ contains no identity element of $G$ and $S^{-1}=S$, then $X$ is an undirected simple graph. The \emph{circulant graphs} are the Cayley graphs over cyclic groups. The complete characterization of strongly regular circulant graphs are independently given by Bridges and Mena \cite{Bridges}, Ma \cite{Ma}, and partially by Maru\v{s}i\v{c} \cite{Marusic}. For other abelian groups, Leifman and Muzychuk \cite{Leifman} classified strongly regular Cayley graphs on $Z_{p^n}\times Z_{p^n}$ for a prime $p$. Recently, the complete characterization of \emph{minimal Cayley graphs} (that is Cayley graphs $Cay(G,S)$ such that $S$ is a minimal generating set of $G$) over abelian groups was given by \cite{Stefko}.

In this paper, we focus on the Cayley graphs over the elementary abelian $2$-group $Z_2^n$. It is clear that the automorphism group of $Z_2^n$ is isomorphic to the general linear group $GL(n,F_2)$. Let $W$ be the collection of permutation matrices of order $n$. It is clear that $W\le GL(n,F_2)$. Note that $Z_2^n$ can be viewed as the vector space of dimension $n$ over the field $F_2$. Therefore, each element $v$ in $Z_2^n$ is a vector $v=(a_1,\ldots,a_n)^T$ such that $a_i\in F_2$. Let $|v|$ be the number of $1$'s in $v$, that is, $|v|=\sum_{i=1}^na_i$. Denote by $O_i=\{v\in Z_2^n\mid |v|=i\}$ for $0\le i\le n$. It is clear that $Pv\in O_i$ for any $P\in W$ and $v\in O_i$. Moreover, for any two vectors $u,v\in O_i$, there exists $P\in W$ such that $Pu=v$. Therefore, each $O_i$ is an orbit of $Z_2^n$ under the action of $W$. A Cayley graph $Cay(Z_2^n,S)$ is called an \emph{orbit Cayley graph} if $S$ is the union of some $O_i$. In this paper, we present a simpler criterion for a Cayley graph to be strongly regular. By using this criterion and some combination identities, we construct eight infinite families of strongly regular graphs among the orbit Cayley graphs over $Z_2^n$, in which six families are new as we know. By the way, we formulate the spectra of orbit Cayley graphs over $Z_2^n$. At last, we propose two research problems.

\section{Orbit Cayley graphs over $Z_2^n$}
We start with the connectivities of orbit Cayley graphs. Since $Cay(Z_2^n,O_0)$ is the union of $2^n$ copies of loops and $Cay(Z_2^n,O_n)$ is the union of $2^{n-1}$ copies of $K_2$, we exclude them when we consider the connectivity of an orbit Cayley graph.
\begin{lem}\label{lem-z-1}
For $i\not\in\{0,n\}$, the set $O_i$ is a generating set of $Z_2^n$ if and only if $i$ is odd.
\end{lem}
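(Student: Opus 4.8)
The plan is to prove the two directions separately, taking the parity of the Hamming weight as the governing invariant. First I would introduce the standard basis vectors $e_1,\ldots,e_n$ of $Z_2^n$ (so $e_j$ has a single $1$ in coordinate $j$) and the even-weight subspace $E=\{v\in Z_2^n\mid |v|\equiv 0\ (\mathrm{mod}\ 2)\}$. The map $v\mapsto\sum_{k=1}^n a_k$ is an $F_2$-linear functional whose kernel is exactly $E$, so $E$ is a subgroup of index $2$ (equivalently an $(n-1)$-dimensional subspace). Throughout I use that the subgroup of $Z_2^n$ generated by a set $S$ coincides with its $F_2$-span, since scalar multiplication is trivial.

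For the forward implication I argue by contraposition: suppose $i$ is even. Then every $v\in O_i$ has $|v|=i\equiv 0$, so $O_i\subseteq E$, and hence the subgroup $\langle O_i\rangle$ it generates is contained in the proper subgroup $E$. Therefore $O_i$ cannot generate $Z_2^n$. (The hypothesis $i\notin\{0,n\}$ is not needed for this containment; its real role appears in the converse.)

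For the converse, suppose $i$ is odd; I would show $\langle O_i\rangle=Z_2^n$ by first producing every weight-$2$ vector $e_j+e_k$ inside $\langle O_i\rangle$. Because $i\notin\{0,n\}$ gives $1\le i\le n-1$, for any distinct coordinates $j\ne k$ I can choose a size-$i$ set $A\subseteq\{1,\ldots,n\}$ with $j\in A$ and $k\notin A$; letting $u$ be the indicator vector of $A$ and $v$ the indicator vector of $(A\setminus\{j\})\cup\{k\}$, both lie in $O_i$, and $u+v=e_j+e_k\in\langle O_i\rangle$. The $n-1$ vectors $\{e_1+e_k\mid 2\le k\le n\}$ are linearly independent and lie in $E$, so they form a basis of $E$; thus $E\subseteq\langle O_i\rangle$. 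Finally, since $i$ is odd, $O_i$ contains some vector $w$ with $w\notin E$, and because $E$ has index $2$, adjoining $w$ yields $\langle O_i\rangle\supseteq\langle E,w\rangle=Z_2^n$.

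The whole argument collapses once the weight-$2$ vectors are in hand, so there is no genuinely hard step; the only point demanding care is the combinatorial construction of the pair $u,v$, which is precisely where the restriction $i\notin\{0,n\}$ is essential, since one needs at least one coordinate inside and at least one outside the support (that is, $1\le i\le n-1$) to realize the swap producing $e_j+e_k$.
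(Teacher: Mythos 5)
Your proof is correct, and in the converse direction it takes a genuinely different (and more uniform) route than the paper. The paper also handles the even case via the parity-of-weight invariant, but for odd $i$ it argues by reduction to $O_1$: for $i=3$ it exhibits three explicit weight-$3$ vectors summing to $e_1$, invokes the orbit structure under the permutation group $W$ to conclude $O_1\subset\langle O_3\rangle$, and then asserts that the remaining odd values of $i$ are ``similar.'' You instead never produce a weight-$1$ vector at all: you generate every weight-$2$ vector $e_j+e_k$ by a single-coordinate swap of two elements of $O_i$ (which is where the hypothesis $1\le i\le n-1$ enters, exactly as you note), observe that these span the even-weight hyperplane $E$, and finish by adjoining any element of $O_i$, which lies outside $E$ because $i$ is odd and $E$ has index $2$. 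What your approach buys is a single argument valid for all odd $i$ with no case analysis and no appeal to the orbit-transitivity of $W$, thereby filling in the ``similarly we can prove'' step that the paper leaves implicit; what the paper's approach buys is brevity in the small cases and a direct identification of $\langle O_i\rangle$ with the span of the standard basis. Both are complete for the forward direction, and your converse is, if anything, the more rigorous of the two.
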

\begin{proof}
If $i$ is even, then $O_i$ generates a subgroup in which each element $v$ satisfies that $|v|$ is even. The necessity follows. Now suppose that $i$ is an odd. If $i=1$ then $O_1$ is exactly a base of $Z_2^n$, and so is a generating set. If $i=3$, then $O_3$ contains $x_1=(1110\cdots 0)^T$, $x_2=(1101\cdots 0)^T$ and $x_3=(1011\cdots 0)^T$. We have $e_1=(1000\cdots 0)^T=x_1+x_2+x_3$. Since $O_3$ is an orbit under $W$, we have $O_1\subset\langle O_3\rangle$. Thus $O_3$ generates $Z_2^n$. Similarly we can prove that $O_i$ generates $Z_2^n$ for other odd number.
\end{proof}
It is well known that a Cayley graph $Cay(G,S)$ is connected if and only if $S$ is a generating set of $G$, that is, $\langle S\rangle=G$. The following result is immediate from Lemma \ref{lem-z-1}.
\begin{cor}\label{cor-z-1}
The orbit Cayley graph $Cay(Z_2^n,S)$ ($S\ne O_0$ or $O_n$) is connected if and only if $S$ contains $O_i$ for some odd $i$.
\end{cor}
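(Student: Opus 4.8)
The plan is to reduce connectivity to a statement about generation and then read off both directions from Lemma~\ref{lem-z-1} together with a parity argument. By the well-known fact recalled just above, $Cay(Z_2^n,S)$ is connected if and only if $\langle S\rangle=Z_2^n$, so it suffices to determine exactly when a union $S=\bigcup_{i\in I}O_i$ of orbits generates the whole group.

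For the direction ``connected $\Rightarrow$ some odd orbit'' I would argue by contraposition. Suppose $S$ contains no orbit $O_i$ with $i$ odd, i.e.\ every index in $I$ is even. The map $\varphi\colon Z_2^n\to Z_2$, $v\mapsto |v|\bmod 2$, is a group homomorphism, since $|u+v|\equiv|u|+|v|\pmod 2$ for all $u,v$. Every element of an even orbit lies in $\ker\varphi$, the even-weight subgroup, which has index $2$ in $Z_2^n$. Hence $S\subseteq\ker\varphi$ forces $\langle S\rangle\subseteq\ker\varphi\subsetneq Z_2^n$, so $Cay(Z_2^n,S)$ is disconnected. This shows a connected orbit Cayley graph must contain $O_i$ for some odd $i$.

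For the reverse direction, suppose $O_i\subseteq S$ with $i$ odd. In the principal case $i\neq n$, Lemma~\ref{lem-z-1} gives $\langle O_i\rangle=Z_2^n$, whence $\langle S\rangle=Z_2^n$ and the graph is connected. The one point needing extra care, and which I expect to be the main obstacle since Lemma~\ref{lem-z-1} explicitly excludes $i=n$, is the case where $n$ is odd and the only odd orbit in $S$ is $O_n=\{\mathbf 1\}$; here $\langle O_n\rangle$ is merely the order-$2$ subgroup $\{0,\mathbf 1\}$, so $O_n$ alone cannot generate. However, the hypothesis $S\neq O_n$, together with the standing simple-graph convention that $S$ omits the identity (so $O_0\not\subseteq S$), forces a further orbit $O_j\subseteq S$ with $1\le j\le n-1$. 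If such a $j$ is odd we are back in the principal case; if $j$ is even (so $2\le j\le n-1$), I would show $\langle O_j\rangle=\ker\varphi$: swapping a single $1$ and a single $0$ between two weight-$j$ vectors produces a weight-$2$ vector, so $O_2\subseteq\langle O_j\rangle$ by the $W$-invariance of $\langle O_j\rangle$, and $\langle O_2\rangle=\ker\varphi$ for $n\ge 2$. Since $\mathbf 1\notin\ker\varphi$ when $n$ is odd, we obtain $\langle O_j,\mathbf 1\rangle=Z_2^n$, hence $\langle S\rangle=Z_2^n$ and the graph is again connected. Combining the two directions completes the proof.
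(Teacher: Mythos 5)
Your proposal is correct, and its skeleton is exactly the paper's: connectivity of $Cay(Z_2^n,S)$ is equivalent to $\langle S\rangle=Z_2^n$, the forward direction follows from the parity homomorphism $v\mapsto|v|\bmod 2$ (the same observation that gives necessity in Lemma~\ref{lem-z-1}), and the reverse direction invokes Lemma~\ref{lem-z-1}. The paper simply declares the corollary ``immediate'' from that lemma, but you are right that it is not quite immediate: Lemma~\ref{lem-z-1} excludes $i=n$, so when $n$ is odd and the only odd-indexed orbit contained in $S$ is $O_n=\{\mathbf 1\}$ (e.g.\ $S=O_n\cup O_2$), the lemma alone does not apply. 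Your patch for this case is sound: since $S\neq O_n$ and $O_0\not\subseteq S$, some $O_j$ with $2\le j\le n-1$ and $j$ even also lies in $S$; adding two weight-$j$ vectors that differ in exactly two coordinates yields a weight-$2$ vector, so $O_2\subseteq\langle O_j\rangle$ by $W$-invariance, $\langle O_j\rangle$ is the even-weight subgroup, and adjoining the odd-weight vector $\mathbf 1$ gives all of $Z_2^n$. So your write-up is the paper's argument plus a genuine (if small) repair of an edge case the paper glosses over; nothing in it is wrong or superfluous.
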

In what follows, we focus on the spectra of connected orbit Cayley graphs. Note that the eigenvalues of Cayley graphs over an abelian group have been given by Babai.

\begin{lem}[\cite{Babai}]\label{lem-z-2}
Let $G$ be an abelian group of order $n$ and $S$ a subset of $G$ such that $1\not\in S$ and $S^{-1}=S$. If $\chi_1,\ldots,\chi_n$ are all irreducible characters of $G$, then the eigenvalues of the Cayley graph $Cay(G,S)$ are $\lambda_i=\sum_{s\in S}\chi_i(s)$ for $1\le i\le n$.
\end{lem}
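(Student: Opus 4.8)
The plan is to exhibit explicit eigenvectors built directly from the characters and then argue completeness by counting. Since $G$ is abelian of order $n$, every irreducible representation is one-dimensional, so each $\chi_i$ is a homomorphism from $G$ into the multiplicative group of nonzero complex numbers; in particular $\chi_i(gh)=\chi_i(g)\chi_i(h)$ for all $g,h\in G$. I would index the rows and columns of the adjacency matrix $A=A(Cay(G,S))$ by the elements of $G$, so that $A_{g,h}=1$ precisely when $g^{-1}h\in S$. Here the condition $S^{-1}=S$ guarantees that $A$ is symmetric and the graph undirected, while $1\not\in S$ rules out loops.

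For each character $\chi_i$ I would form the column vector $v_i=(\chi_i(g))_{g\in G}\in\mathbb{C}^G$ and compute $Av_i$ directly. Its $g$-entry is $\sum_{h\in G}A_{g,h}\chi_i(h)=\sum_{s\in S}\chi_i(gs)$, after the substitution $h=gs$, which lets $s$ range over $S$ exactly as $h$ ranges over the neighbours of $g$. Using the homomorphism property, this equals $\chi_i(g)\sum_{s\in S}\chi_i(s)=\lambda_i\,\chi_i(g)$. Hence $Av_i=\lambda_i v_i$, so every $v_i$ is an eigenvector of $A$ with eigenvalue $\lambda_i=\sum_{s\in S}\chi_i(s)$.

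It then remains to show that the list $\lambda_1,\ldots,\lambda_n$ accounts for all eigenvalues of $A$ with the correct multiplicities. For this I would invoke the orthogonality relations for the characters of a finite abelian group: distinct irreducible characters are orthogonal with respect to the standard Hermitian inner product, so the vectors $v_1,\ldots,v_n$ are pairwise orthogonal and in particular linearly independent. As $A$ acts on the $n$-dimensional space $\mathbb{C}^G$ and we have produced $n$ independent eigenvectors, these $v_i$ form a basis of eigenvectors, and therefore $\lambda_1,\ldots,\lambda_n$ is precisely the spectrum of $Cay(G,S)$ counted with multiplicity.

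The only delicate point is this final completeness step. The eigenvalue computation itself is a one-line consequence of the homomorphism property, but to be certain that no eigenvalue is missed one must know both that there are exactly $n$ characters and that they are linearly independent; this is exactly where the abelian hypothesis (forcing all irreducible representations to be one-dimensional) and the character orthogonality relations are indispensable.
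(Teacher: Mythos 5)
Your proof is correct and complete: the verification that $Av_i=\lambda_i v_i$ via the homomorphism property is right, and you correctly handle the only subtle point (completeness of the spectrum) by invoking orthogonality of the $n$ characters to get a full eigenbasis of $\mathbb{C}^G$. The paper itself gives no proof of this lemma --- it is quoted as a known result of Babai --- but your argument is the standard one for that result, so there is nothing further to compare.
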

To get the eigenvalues of orbit Cayley graphs over $Z_2^n$, we should know the irreducible characters of $Z_2^n$.
\begin{lem}[\cite{Jean}]\label{lem-z-3}
The irreducible characters of $Z_2^n$ are $\chi_{i_1,\ldots,i_n}$ for $i_j\in\{0,1\}$ and $1\le j\le n$, where $\displaystyle \chi_{i_1,\ldots,i_n}(v)=(-1)^{a_1i_1+\cdots+a_ni_n}$
 for $v=(a_1,\ldots,a_n)^T\in Z_2^n$.
\end{lem}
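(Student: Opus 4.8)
The plan is to verify directly that each proposed function $\chi_{i_1,\ldots,i_n}$ is an irreducible character, that distinct index tuples yield distinct characters, and that the total count matches the group order, so that nothing is missed. Since $Z_2^n$ is abelian, every irreducible representation is one-dimensional and coincides with its character, and the number of irreducible characters of a finite abelian group equals its order $2^n$. Thus it suffices to exhibit $2^n$ pairwise distinct group homomorphisms $Z_2^n\to\mathbb{C}^\ast$, and the stated list will automatically be complete.

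First I would check the homomorphism property. Writing the group operation additively (componentwise modulo $2$), for $v=(a_1,\ldots,a_n)^T$ and $w=(b_1,\ldots,b_n)^T$ we have $v+w=(a_1+b_1,\ldots,a_n+b_n)^T$ with coordinates reduced modulo $2$. The key observation is that the reduction modulo $2$ is irrelevant inside the exponent of $(-1)$, because $(-1)^2=1$; hence
\[
\chi_{i_1,\ldots,i_n}(v+w)=(-1)^{\sum_j (a_j+b_j)i_j}=(-1)^{\sum_j a_j i_j}\,(-1)^{\sum_j b_j i_j}=\chi_{i_1,\ldots,i_n}(v)\,\chi_{i_1,\ldots,i_n}(w).
\]
This shows each $\chi_{i_1,\ldots,i_n}$ is a homomorphism into $\{\pm 1\}\subset\mathbb{C}^\ast$, hence a one-dimensional character.

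Next I would establish distinctness by evaluating on the standard basis: if two index tuples differ, say in position $k$, then testing against the basis vector $e_k$ (a single $1$ in coordinate $k$) gives disagreeing values $(-1)^{i_k}$, so the two characters are distinct. This produces $2^n$ pairwise distinct characters, and the counting remark above closes the argument. Alternatively, the same conclusion follows structurally from $Z_2^n=Z_2\times\cdots\times Z_2$, since the irreducible characters of a finite direct product are the coordinatewise products of irreducible characters of the factors, and the two characters of $Z_2$ are $a\mapsto 1$ and $a\mapsto(-1)^a$. There is no genuine obstacle here; the only point requiring a moment of care is the harmless reduction modulo $2$ in the exponent noted above, after which the verification is entirely routine.
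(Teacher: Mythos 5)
Your proposal is correct and complete. Note that the paper offers no proof of this lemma at all---it is stated as a known fact with a citation to Serre's book---so there is nothing to compare against; your verification (homomorphism property via the harmless reduction mod $2$ in the exponent, distinctness on the standard basis, and the count $2^n$ matching the number of irreducible characters of an abelian group of that order) is the standard textbook argument and fully justifies the statement.
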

For a group $G$ and a subset $T$, denote by $\chi(T)=\sum_{t\in T}\chi(t)$ for a character $\chi$. By immediate calculation, we get the following result.
\begin{lem}\label{lem-z-4}
Let $\chi_{i_1,\ldots,i_n}$ be an irreducible character of $Z_2^n$ and $O_i$ an orbit. If $i_1+\cdots+i_n=k$, then $\displaystyle \chi_{i_1,\ldots,i_n}(O_i)=\sum_{j=0}^i{k\choose j}{n-k\choose i-j}(-1)^j$.
\end{lem}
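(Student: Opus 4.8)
The plan is to expand the character sum directly and then reorganize it as a count over a single integer statistic. By the definition of $\chi(T)$ and Lemma \ref{lem-z-3}, we have
\begin{equation*}
\chi_{i_1,\ldots,i_n}(O_i)=\sum_{v\in O_i}(-1)^{a_1i_1+\cdots+a_ni_n},
\end{equation*}
where $v=(a_1,\ldots,a_n)^T$ ranges over all vectors of weight $i$. Since each $i_\ell\in\{0,1\}$, the exponent $a_1i_1+\cdots+a_ni_n$ simply counts the number of coordinates $\ell$ at which both $a_\ell=1$ and $i_\ell=1$; in particular it contributes only through its parity.

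First I would fix the support $K=\{\ell\mid i_\ell=1\}$, which has cardinality $k$ by hypothesis, together with its complement of size $n-k$. For a vector $v\in O_i$, let $j$ denote the number of its $1$-entries that lie in positions of $K$. Then the exponent above equals exactly $j$, so $\chi_{i_1,\ldots,i_n}(v)=(-1)^j$ depends only on $j$. This reduces the character sum to grouping the vectors of $O_i$ according to the value of $j$.

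Next I would count, for each fixed $j$, the number of vectors $v\in O_i$ having precisely $j$ ones inside $K$ and the remaining $i-j$ ones outside $K$. Choosing the $j$ positions inside $K$ can be done in $\binom{k}{j}$ ways and the $i-j$ positions outside $K$ in $\binom{n-k}{i-j}$ ways, so there are $\binom{k}{j}\binom{n-k}{i-j}$ such vectors, each contributing $(-1)^j$. Summing over the admissible values of $j$ then yields
\begin{equation*}
\chi_{i_1,\ldots,i_n}(O_i)=\sum_{j}\binom{k}{j}\binom{n-k}{i-j}(-1)^j.
\end{equation*}

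Finally, the feasible range for $j$ is $\max(0,i-(n-k))\le j\le\min(i,k)$; outside this range at least one of the two binomial coefficients vanishes, so the sum may be extended harmlessly to $0\le j\le i$, which matches the claimed formula. There is no serious obstacle here, consistent with the author's remark that it follows by immediate calculation: the only point requiring a little care is the bookkeeping of the summation range, together with the observation that the vanishing of out-of-range binomial coefficients makes the uniform index set $\{0,1,\ldots,i\}$ legitimate.
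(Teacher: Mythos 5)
Your proof is correct and follows essentially the same route as the paper: both group the vectors of $O_i$ by the number $j$ of their $1$-entries lying in the support of the character, count each class as $\binom{k}{j}\binom{n-k}{i-j}$, and extend the summation range to $0\le j\le i$ using the vanishing of out-of-range binomial coefficients. Your treatment of the feasible range is, if anything, slightly more explicit than the paper's.
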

\begin{proof}
By definition, we have $\chi_{i_1,\ldots,i_n}(O_i)=\sum_{v\in O_i}\chi_{i_1,\ldots,i_n}(v)$. Denote by $\Lambda_j^i=\{v=(a_1,\ldots,a_n)^T\in O_i\mid a_1i_1+\cdots+a_ni_n=j\}$. Therefore, for each $v\in \Lambda_j^i$, we have $\chi_{i_1,\ldots,i_n}(v)=(-1)^j$ and thus $\chi_{i_1,\ldots,i_n}(O_i)=\sum_{j=0}^{r(i,k)}|\Lambda_j^i|(-1)^j$, where $r(i,k)=\min\{i,k\}$. It is not hard to verify that $|\Lambda_j^i|={k\choose j}{n-k\choose i-j}$. Thus, we have
\[\displaystyle \chi_{i_1,\ldots,i_n}(O_i)=\sum_{j=0}^{r(i,k)}{k\choose j}{n-k\choose i-j}(-1)^j=\sum_{j=0}^i{k\choose j}{n-k\choose i-j}(-1)^j.\]
\end{proof}
\begin{thm}\label{thm-z-1}
Let $Cay(Z_2^n,S)$ be an orbit Cayley graph over $Z_2^n$. If $S=\cup_{i\in I}O_i$ for some subset $I\subseteq\{1,\ldots,n\}$, then the eigenvalues of $Cay(Z_2^n,S)$ consist of $\lambda_k(S)$ with multiplicity ${n\choose k}$ for $0\le k\le n$, where
\[\displaystyle \lambda_k(S)=\sum_{i\in I}\sum_{j=0}^{i}{k\choose j}{n-k\choose i-j}(-1)^j.\]
\end{thm}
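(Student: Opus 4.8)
The plan is to obtain the spectrum by directly assembling the three preceding lemmas, since together they already pin down every eigenvalue once one observes that the relevant character sums depend only on the Hamming weight of the character index.

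First I would verify that Babai's lemma (Lemma \ref{lem-z-2}) applies to $Cay(Z_2^n,S)$. Every element of $Z_2^n$ is an involution, so $S^{-1}=S$ holds automatically, and since $I\subseteq\{1,\ldots,n\}$ the set $S$ avoids the zero vector, i.e. the identity. Hence the eigenvalues of $Cay(Z_2^n,S)$ are exactly the numbers $\chi(S)$ as $\chi$ ranges over the $2^n$ irreducible characters listed in Lemma \ref{lem-z-3}, each character contributing one eigenvalue counted with multiplicity.

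Next, because the orbits $\{O_i\}_{i\in I}$ are pairwise disjoint with union $S$, the character sum splits additively: $\chi(S)=\sum_{i\in I}\chi(O_i)$ for every $\chi$. Fixing a character $\chi=\chi_{i_1,\ldots,i_n}$ and setting $k=i_1+\cdots+i_n$, Lemma \ref{lem-z-4} evaluates each orbit sum as $\chi(O_i)=\sum_{j=0}^{i}{k\choose j}{n-k\choose i-j}(-1)^j$, and summing over $i\in I$ yields precisely $\lambda_k(S)$. The crucial point to highlight is the invariance encoded in Lemma \ref{lem-z-4}: the value $\chi(O_i)$ depends on the character only through the integer $k$, the number of coordinates equal to $1$, and not on which coordinates those are. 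Consequently all characters of the same weight $k$ produce the identical eigenvalue $\lambda_k(S)$.

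Finally I would count multiplicities. The characters of $Z_2^n$ are indexed by binary strings $(i_1,\ldots,i_n)$, and exactly ${n\choose k}$ of them have weight $k$; by the previous paragraph each such character yields the eigenvalue $\lambda_k(S)$. Letting $k$ run over $0,\ldots,n$ partitions all the characters, and $\sum_{k=0}^{n}{n\choose k}=2^n$ confirms that every eigenvalue is accounted for, so the spectrum consists of $\lambda_k(S)$ with multiplicity ${n\choose k}$ for $0\le k\le n$, as claimed. There is no genuine analytic obstacle here: the argument is a bookkeeping assembly of the lemmas, and the only step requiring care is confirming that the weight $k$ is the sole invariant governing the eigenvalue, which is exactly what legitimizes the grouping into the stated multiplicities.
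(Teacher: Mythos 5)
Your proposal is correct and follows essentially the same route as the paper's own proof: apply Babai's lemma, split the character sum over the disjoint orbits, invoke Lemma \ref{lem-z-4} to see that the value depends only on the weight $k$ of the character index, and count the ${n\choose k}$ characters of each weight. The only addition is your explicit check that $S^{-1}=S$ and $\mathbf{0}\notin S$, which the paper leaves implicit.
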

\begin{proof}
By Lemma \ref{lem-z-2}, the eigenvalues of $Cay(Z_2^n,S)$ are given by
\[\chi_{i_1,\ldots,i_n}(S)=\sum_{i\in I}\chi_{i_1,\ldots,i_n}(O_i)\]
where $\chi_{i_1,\ldots,i_n}$ are irreducible characters of $Z_2^n$. Denote by $\Upsilon_k=\{\chi_{i_1,\ldots,i_n}\mid i_1+\cdots+i_n=k\}$ for $0\le k\le n$. By Lemma \ref{lem-z-4}, we have
\[\chi_{i_1,\ldots,i_n}(S)=\sum_{i\in I}\sum_{j=0}^i{k\choose j}{n-k\choose i-j}(-1)^j\]
for any $\chi_{i_1,\ldots,i_n}\in \Upsilon_k$. It means that all characters in $\Upsilon_k$ correspond to the same eigenvalue, denoted by $\lambda_k$. Note that $|\Upsilon_k|={n\choose k}$. It means that the multiplicity of $\lambda_k$ is ${n\choose k}$. It completes the proof.
\end{proof}
At a glance of Theorem \ref{thm-z-1}, one may think that any orbit Cayley graph over $Z_2^n$ has exactly $n+1$ distinct eigenvalues. However, the values $\lambda_k$ and $\lambda_{k'}$ may be the same for distinct integers $k$ and $k'$. Thus,  Theorem \ref{thm-z-1} just implies the following result.
\begin{cor}\label{cor-z-2}
If $Cay(Z_2^n,S)$ is an orbit Cayley graph, then it has at most $n+1$ distinct eigenvalues.
\end{cor}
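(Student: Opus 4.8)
The plan is to read the bound off directly from Theorem \ref{thm-z-1}, which does essentially all the work. That theorem shows that every eigenvalue of $Cay(Z_2^n,S)$ coincides with one of the numbers $\lambda_k(S)$ indexed by $k\in\{0,1,\ldots,n\}$, the value $\lambda_k(S)$ occurring with multiplicity $\binom{n}{k}$. The first step is therefore simply to record that the set of eigenvalues, viewed as a set without multiplicities, is contained in $\{\lambda_0(S),\lambda_1(S),\ldots,\lambda_n(S)\}$.

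The second step is to count the index set: here $k$ ranges over the $n+1$ integers $0,1,\ldots,n$, so the candidate set $\{\lambda_0(S),\ldots,\lambda_n(S)\}$ has at most $n+1$ elements. Since the set of distinct eigenvalues is a subset of this candidate set, the number of distinct eigenvalues is at most $n+1$, which is exactly the assertion. Notice that $\sum_{k=0}^n\binom{n}{k}=2^n$ is the order of $Z_2^n$, so these multiplicities account for the full spectrum and no eigenvalue is missed.

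The only point worth flagging is why the statement reads \emph{at most} $n+1$ rather than \emph{exactly} $n+1$. This is because the assignment $k\mapsto\lambda_k(S)$ need not be injective: as observed in the paragraph preceding the statement, one can have $\lambda_k(S)=\lambda_{k'}(S)$ for distinct $k,k'$, in which case two or more of the $n+1$ candidate values collapse to a single eigenvalue and the true number of distinct eigenvalues drops below $n+1$. No genuine obstacle arises in the proof; the corollary is an immediate consequence of Theorem \ref{thm-z-1}. The substantive question, namely for which $S$ such collisions force the number of distinct eigenvalues down to three (and thereby yield strong regularity), is precisely what the subsequent sections address and is not part of this short deduction.
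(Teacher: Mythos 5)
Your proposal is correct and matches the paper exactly: the paper states the corollary as an immediate consequence of Theorem \ref{thm-z-1}, noting (as you do) that the $n+1$ values $\lambda_0(S),\ldots,\lambda_n(S)$ exhaust the spectrum but may coincide for distinct indices. Nothing further is needed.
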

Note that, if $S=Z_2^n\setminus \{\mathbf{0}\}$, then the orbit Cayley graph $Cay(Z_2^n,S)$ is isomorphic to the complete graph, which has exactly two distinct eigenvalues. In general, we pose the following problem.
\begin{prob}\label{conj-d-1}
If there exists an orbit Cayley graph over $Z_2^n$ with exactly $m$ distinct eigenvalues for any integer $m$ such that $2\le m\le n+1$ ?
\end{prob}
The answer is positive for $m=2$. The following examples imply that the answer is also positive for $m=n+1$, $\lfloor n/2\rfloor+2$ and $3$, respectively. Note that, if an $r$-regular graph has eigenvalues $r>\lambda_2\ge\cdots\ge\lambda_n$, then its complement has eigenvalues $n-1-r>-1-\lambda_n>\cdots>-1-\lambda_2$ \cite{Cvetkovic}.
\begin{exa}\label{exa-1}
We consider the orbit Cayley graph $X=Cay(Z_2^n,S)$ where $S=Z_2^n\setminus(\{\mathbf{0}\}\cup O_1)$. It is clear that $\overline{X}=Cay(Z_2^n,O_1)$. By Theorem \ref{thm-z-1}, the spectrum of $\overline{X}$ consists of
\[\lambda_k(O_1)=\sum_{j=0}^1{k\choose j}{n-k\choose i-j}(-1)^j={k\choose 0}{n-k\choose 1}-{k\choose 1}{n-k\choose 0}=n-2k\]
with multiplicity ${n\choose k}$ for $0\le k\le n$. Note that $\overline{X}$ is $n$-regular. The spectrum of $X$ consists of $\mu_0=2^n-1-n$ with multiplicity $1$ and $\mu_k=2k-1-n$ with multiplicity ${n\choose k}$ for $1\le k\le n$. Therefore, $X$ and $\overline{X}$ are both connected orbit Cayley graphs having $n+1$ distinct eigenvalues.
\end{exa}
\begin{exa}\label{exa-2}
We consider the orbit Cayley graph $X=Cay(Z_2^n,O_2)$. By Corollary \ref{cor-z-1}, $X$ is disconnected and thus $\overline{X}$ is connected. By Theorem \ref{thm-z-1}, the eigenvalues of $X$ consist of
\[\lambda_k(O_2)=\sum_{j=0}^2{k\choose j}{n-k\choose i-j}(-1)^j=\frac{n(n-1)-4k(n-k)}{2}\]
with multiplicity ${n\choose k}$ for $0\le k\le n$. Therefore, the eigenvalues of $\overline{X}$ consist of $\mu_0=2^n-1-\frac{n(n-1)}{2}$ with multiplicity $1$ and $\mu_k=-1-\frac{n(n-1)-4k(n-k)}{2}$ with multiplicity ${n\choose k}$ for $1\le k\le n$. Note that $\lambda_k=-1-\frac{n(n-1)-4k(n-k)}{2}=-1-\frac{n(n-1)-4k'(n-k')}{2}=\lambda_{k'}$ if and only if $k'=k$ or $k'=n-k$. It means that the connected orbit graph $\overline{X}$ has exactly $\lfloor n/2\rfloor+2$ eigenvalues.
\end{exa}
\begin{exa}\label{exa-3}
We consider the orbit Cayley graph $X=Cay(Z_2^n,S)$ where $S=\cup_{i=1}^{n-1}O_i$. It is clear that $\overline{X}=Cay(Z_2^n,O_n)$ which is the union of $2^{n-1}$ copies of $K_2$. Therefore, the spectrum of $\overline{X}$ consists of $\pm 1$ with multiplicity $2^{n-1}$ and thus the spectrum of $X$ consists of $2^n-2$ with multiplicity $1$, $0$ with multiplicity $2^{n-1}$ and $-2$ with multiplicity $2^{n-1}-1$. It follows that $X$ has exactly three distinct eigenvalues.
\end{exa}
Note that the Cayley graphs having three distinct eigenvalues are strongly regular graphs. However, it is not easy to characterize the strongly regular orbit Cayley graphs by immediately calculating their spectra by applying Theorem \ref{thm-z-1}. In the rest of this paper, we will characterize some strongly regular orbit Cayley graphs by analyzing their structures. We need some combination identities.
\section{Combination identities}
In this part, we shall give some combination identities, which will be used to prove our main results. Throughout this paper, we always set ${s\choose t}=0$ if $t<0$ or $t>s$.
\begin{lem}\label{lem-z-5}
For a positive integer $n$, we have
\[\sum_{i=0}^n{n\choose 2i}=\sum_{i=0}^n{n\choose 2i+1}=2^{n-1}.\]
\end{lem}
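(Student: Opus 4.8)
The plan is to derive both equalities from a single application of the binomial theorem, exploiting the evaluation at $x=1$ and $x=-1$. Recall that for any $n$,
\[(1+x)^n=\sum_{j=0}^n\binom{n}{j}x^j.\]
First I would substitute $x=1$ to obtain $\sum_{j=0}^n\binom{n}{j}=2^n$, and then substitute $x=-1$ to obtain $\sum_{j=0}^n\binom{n}{j}(-1)^j=0$, the latter holding for every positive integer $n$.

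Next I would separate each of these identities into contributions from even and odd indices. Writing $E=\sum_{j\text{ even}}\binom{n}{j}$ and $D=\sum_{j\text{ odd}}\binom{n}{j}$, the first identity reads $E+D=2^n$ while the second reads $E-D=0$. Solving this elementary linear system yields $E=D=2^{n-1}$, which is precisely the common value claimed.

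Finally I would reconcile the index ranges with the statement. By the convention $\binom{s}{t}=0$ for $t<0$ or $t>s$ adopted just above the lemma, the sum $\sum_{i=0}^n\binom{n}{2i}$ is exactly $E$ (the extra terms with $2i>n$ vanish), and likewise $\sum_{i=0}^n\binom{n}{2i+1}$ is exactly $D$. Thus both sums equal $2^{n-1}$.

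I do not expect any genuine obstacle here, as this is a standard identity; the only point requiring a moment of care is the bookkeeping between the "even $j$ / odd $j$" description and the explicit summation bounds $0\le i\le n$, which is handled cleanly by the stated zero convention for out-of-range binomial coefficients.
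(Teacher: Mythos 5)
Your proposal is correct and follows essentially the same route as the paper: both evaluate $(1+x)^n$ at $x=1$ and $x=-1$, split the resulting identities into even- and odd-index parts, and solve the two-by-two linear system to conclude that each part equals $2^{n-1}$. Your explicit remark about the out-of-range binomial coefficients vanishing is a welcome bit of care that the paper leaves implicit.
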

\begin{proof}
It is clear that
\[\left\{\begin{array}{l}
\displaystyle (1+1)^n=\sum_{i=0}^n{n\choose i}=\sum_{i=0}^n{n\choose 2i}+\sum_{i=0}^n{n\choose 2i+1}\\
\displaystyle (1-1)^n=\sum_{i=0}^n{n\choose i}(-1)^i=\sum_{i=0}^n{n\choose 2i}-\sum_{i=0}^n{n\choose 2i+1}
\end{array}\right.\]
By summation the two equations, we get $2\sum_{i=0}^n{n\choose 2i}=2^n$ and thus $\sum_{i=0}^n{n\choose 2i}=2^{n-1}$. Subtracting the second equation from the first one, we have $2\sum_{i=0}^n{n\choose 2i+1}=2^n$ and thus $\sum_{i=0}^n{n\choose 2i+1}=2^{n-1}$.
\end{proof}
\begin{lem}\label{lem-z-6}
For a positive integer $m$, we have
\[\left\{\begin{array}{l}
\displaystyle \sum_{j=0}^{m}{4m\choose 4j}=2^{4m-2}+(-1)^m2^{2m-1}\\
\displaystyle \sum_{j=0}^{m}{4m\choose 4j+2}=2^{4m-2}-(-1)^m2^{2m-1}\\
\displaystyle \sum_{j=0}^{m}{4m\choose 4j+1}=\sum_{j=0}^{m}{4m\choose 4j+3}=2^{4m-2}\\
\end{array}\right.\]
\end{lem}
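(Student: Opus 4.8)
The plan is to generalize the $(1\pm1)^n$ trick used for Lemma~\ref{lem-z-5} by evaluating the generating polynomial $(1+x)^{4m}=\sum_{k=0}^{4m}\binom{4m}{k}x^k$ at \emph{all four} fourth roots of unity $x\in\{1,-1,i,-i\}$ rather than just at $\pm1$. Writing $A_r=\sum_{j\ge 0}\binom{4m}{4j+r}$ for $r\in\{0,1,2,3\}$ (these are exactly the four quantities in the statement, the upper summation limit $j=m$ being harmless since $\binom{4m}{t}=0$ for $t>4m$), the substitution $x=i^t$ sorts the binomial coefficients by the residue of $k$ modulo $4$, because $i^{4j+r}=i^{r}$ depends only on $r$. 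This turns the four polynomial evaluations into four linear equations in the unknowns $A_0,A_1,A_2,A_3$.

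Concretely, I would record the four evaluations
\[
(1+1)^{4m}=A_0+A_1+A_2+A_3,\qquad (1-1)^{4m}=A_0-A_1+A_2-A_3,
\]
\[
(1+i)^{4m}=A_0+iA_1-A_2-iA_3,\qquad (1-i)^{4m}=A_0-iA_1-A_2+iA_3.
\]
The left-hand sides are explicit: $(1+1)^{4m}=2^{4m}$ and $(1-1)^{4m}=0$, while, using $(1+i)^2=2i$ and $(1-i)^2=-2i$, one obtains $(1+i)^{4m}=(2i)^{2m}=(-1)^m2^{2m}$ and likewise $(1-i)^{4m}=(-2i)^{2m}=(-1)^m2^{2m}$.

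With these values in hand the system decouples cleanly. Adding the first two equations gives $A_0+A_2=2^{4m-1}$ and subtracting them gives $A_1+A_3=2^{4m-1}$; adding the last two gives $A_0-A_2=(-1)^m2^{2m}$ and subtracting them gives $A_1-A_3=0$. Solving these four two-variable relations yields $A_1=A_3=2^{4m-2}$ together with $A_0=2^{4m-2}+(-1)^m2^{2m-1}$ and $A_2=2^{4m-2}-(-1)^m2^{2m-1}$, which are precisely the four claimed identities.

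The computation is entirely routine; the only place demanding care is the evaluation of $(1\pm i)^{4m}$, where one must track the powers of $i$ correctly, and this is where I expect a sign error to be most tempting. Everything else is a symmetric $4\times4$ linear system that separates into two independent $2\times2$ blocks, so no real obstacle arises once the two complex evaluations are pinned down.
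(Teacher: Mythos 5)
Your proposal is correct and is essentially the paper's own argument: the paper likewise obtains $A_0-A_2=(-1)^m2^{2m}$ and $A_1-A_3=0$ by expanding $(1+i)^{4m}$ (computed in polar form as $(\sqrt2 e^{\pi i/4})^{4m}$ rather than via $(1+i)^2=2i$, and separating real and imaginary parts instead of also evaluating at $-i$), and it gets $A_0+A_2=A_1+A_3=2^{4m-1}$ from Lemma~\ref{lem-z-5}, i.e.\ the evaluations at $\pm1$. Your presentation as a single roots-of-unity filter is a cosmetic repackaging of the same computation, and all your evaluations and the resulting $2\times 2$ systems check out.
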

\begin{proof}
Let $i$ be the imaginary unit. Note that $1+i=\sqrt{2}e^{\pi i/4}$. We have
\begin{eqnarray*}
&&\displaystyle 2^{2m}\cos{m\pi}+(2^{2m}\sin{m\pi}) i=2^{2m}e^{m\pi i}=(\sqrt{2}e^{\pi i/4})^{4m}=(1+i)^{4m}\\
&=&\displaystyle \sum_{j=0}^m{4m\choose 4j}+\sum_{j=0}^m{4m\choose 4j+2}i^2+\sum_{j=0}^m{4m\choose 4j+1}i+\sum_{j=0}^m{4m\choose 4j+3}i^3\\
&=&\displaystyle \left(\sum_{j=0}^m{4m\choose 4j}-\sum_{j=0}^m{4m\choose 4j+2}\right)+\left(\sum_{j=0}^m{4m\choose 4j+1}-\sum_{j=0}^m{4m\choose 4j+3}\right)i.
\end{eqnarray*}
It leads to that
\begin{equation}\label{eq-x-1}\left\{\begin{array}{l}
\displaystyle \sum_{j=0}^m{4m\choose 4j}-\sum_{j=0}^m{4m\choose 4j+2}=2^{2m}\cos{m\pi}=(-1)^m2^{2m}\\
\displaystyle \sum_{j=0}^m{4m\choose 4j+1}-\sum_{j=0}^m{4m\choose 4j+3}=2^{2m}\sin{m\pi}=0.
\end{array}\right.\end{equation}
From Lemma \ref{lem-z-5}, we have
\begin{equation}\label{eq-x-2}
\sum_{j=0}^m{4m\choose 4j}+\sum_{j=0}^m{4m\choose 4j+2}=\sum_{j=0}^m{4m\choose 4j+1}+\sum_{j=0}^m{4m\choose 4j+3}=2^{4m-1}.
\end{equation}
Thus, the result follows from (\ref{eq-x-1}) and (\ref{eq-x-2}).
\end{proof}
Similarly, by respectively considering $(1+i)^{4m+1}$, $(1+i)^{4m+2}$ and $(1+i)^{4m+3}$, we get the following result.
\begin{lem}\label{lem-z-7}
For a positive integer $m$, we have\\
(a) $\displaystyle \sum_{j=0}^m{4m+1\choose 4j}=\sum_{j=0}^m{4m+1\choose 4j+1}=2^{4m-1}+(-1)^m2^{2m-1}$;\\
(b) $\displaystyle \sum_{j=0}^m{4m+1\choose 4j+2}=\sum_{j=0}^m{4m+1\choose 4j+3}=2^{4m-1}-(-1)^m2^{2m-1}$;\\
(c) $\displaystyle \sum_{j=0}^m{4m+2\choose 4j}=\sum_{j=0}^m{4m+2\choose 4j+2}=2^{4m}$;\\
(d) $\displaystyle \sum_{j=0}^m{4m+2\choose 4j+1}=2^{4m}+(-1)^m2^{2m}$;\\
(e) $\displaystyle \sum_{j=0}^m{4m+2\choose 4j+3}=2^{4m}-(-1)^m2^{2m}$;\\
(f) $\displaystyle \sum_{j=0}^m{4m+3\choose 4j}=\sum_{j=0}^m{4m+3\choose 4j+3}=2^{4m+1}-(-1)^m2^{2m}$;\\
(g) $\displaystyle \sum_{j=0}^m{4m+3\choose 4j+1}=\sum_{j=0}^m{4m+3\choose 4j+2}=2^{4m+1}+(-1)^m2^{2m}$.
\end{lem}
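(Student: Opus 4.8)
The plan is to follow the proof of Lemma \ref{lem-z-6} line for line, replacing the exponent $4m$ by $4m+1$, $4m+2$, and $4m+3$ in turn. The engine in each case is the identity $(1+i)^N = (\sqrt{2})^N e^{N\pi i/4}$ combined with the binomial expansion $(1+i)^N = \sum_{k=0}^N {N\choose k} i^k$, in which the powers of $i$ cycle through $1, i, -1, -i$ as $k$ runs through the residues $0,1,2,3$ modulo $4$. Grouping the expansion by these four residue classes and reading off real and imaginary parts expresses the two alternating differences
\[
\sum_j {N\choose 4j} - \sum_j {N\choose 4j+2}, \qquad \sum_j {N\choose 4j+1} - \sum_j {N\choose 4j+3}
\]
in closed form. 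Pairing each of these with the corresponding total sums supplied by Lemma \ref{lem-z-5}, namely that the even-indexed and the odd-indexed coefficients of ${N\choose \cdot}$ each sum to $2^{N-1}$, yields two $2\times 2$ linear systems whose solutions are exactly the asserted identities.

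The only genuinely new input for each exponent is the polar value of $(1+i)^N$, and the cleanest route is to first record $(1+i)^4 = (2i)^2 = -4$, so that $(1+i)^{4m} = (-4)^m = (-1)^m 2^{2m}$; multiplying by $(1+i)$, by $(1+i)^2 = 2i$, and by $(1+i)^3 = -2+2i$ then gives
\[
(1+i)^{4m+1} = (-1)^m 2^{2m}(1+i),\quad (1+i)^{4m+2} = (-1)^m 2^{2m+1} i,\quad (1+i)^{4m+3} = (-1)^m 2^{2m+1}(-1+i).
\]
Separating real and imaginary parts supplies the right-hand sides of the two differences above: real and imaginary parts both equal to $(-1)^m 2^{2m}$ for $N=4m+1$, real part $0$ and imaginary part $(-1)^m 2^{2m+1}$ for $N=4m+2$, and real part $-(-1)^m 2^{2m+1}$ with imaginary part $(-1)^m 2^{2m+1}$ for $N=4m+3$. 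Solving the resulting systems produces (a)--(g); in particular the vanishing real part in the $4m+2$ case forces $\sum_j{4m+2\choose 4j} = \sum_j{4m+2\choose 4j+2} = 2^{4m}$, which is why (c) carries no alternating correction term.

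There is no substantive obstacle here, since the content is structurally identical to Lemma \ref{lem-z-6} and the difficulty is purely one of bookkeeping. The one point requiring genuine care is the range of summation: because $N$ is not a multiple of $4$, some residue classes contain one fewer term than others, so the uniform upper limit $j=m$ occasionally introduces a coefficient ${N\choose 4j+r}$ with $4j+r > N$. Such terms vanish under the standing convention ${s\choose t}=0$ for $t>s$, which is precisely what lets all twelve identities be written with the common index range $0\le j\le m$. Keeping the three sign patterns $(-1)^m$ straight across these twelve sums is the only place where an arithmetic slip is likely.
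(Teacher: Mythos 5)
Your proposal is correct and follows essentially the same route as the paper: expand $(1+i)^{4m+1}$, $(1+i)^{4m+2}$, $(1+i)^{4m+3}$, separate real and imaginary parts to get the alternating differences, and combine with Lemma \ref{lem-z-5} to solve for each residue-class sum. The only cosmetic difference is that you evaluate $(1+i)^N$ via $(1+i)^4=-4$ rather than the polar form $\sqrt{2}e^{\pi i/4}$ used in the paper, and your sign bookkeeping checks out in all seven parts.
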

\begin{proof}
As similar as the proof of Lemma \ref{lem-z-6}, we have
\begin{eqnarray*}
&&\displaystyle 2^{2m}\cos{m\pi}+(2^{2m}\cos{m\pi}) i=(\sqrt{2}e^{\pi i/4})^{4m+1}=(1+i)^{4m+1}\\
&=&\displaystyle \sum_{j=0}^m{4m+1\choose 4j}+\sum_{j=0}^m{4m+1\choose 4j+2}i^2+\sum_{j=0}^m{4m+1\choose 4j+1}i+\sum_{j=0}^m{4m+1\choose 4j+3}i^3\\
&=&\displaystyle \left(\sum_{j=0}^m{4m+1\choose 4j}-\sum_{j=0}^m{4m+1\choose 4j+2}\right)+\left(\sum_{j=0}^m{4m+1\choose 4j+1}-\sum_{j=0}^m{4m+1\choose 4j+3}\right)i.
\end{eqnarray*}
It leads to that
\[\left\{\begin{array}{l}
\displaystyle \sum_{j=0}^m{4m+1\choose 4j}-\sum_{j=0}^m{4m+1\choose 4j+2}=2^{2m}\cos{m\pi}=(-1)^m2^{2m}\\
\displaystyle\sum_{j=0}^m{4m+1\choose 4j+1}-\sum_{j=0}^m{4m+1\choose 4j+3}=2^{2m}\cos{m\pi}=(-1)^m2^{2m}.
\end{array}\right.\]
From Lemma \ref{lem-z-5}, we have
\[
\sum_{j=0}^m{4m+1\choose 4j}+\sum_{j=0}^m{4m+1\choose 4j+2}=\sum_{j=0}^m{4m+1\choose 4j+1}+\sum_{j=0}^m{4m+1\choose 4j+3}=2^{4m}.
\]
Therefore, we have $\sum_{j=0}^m{4m+1\choose 4j}=\sum_{j=0}^m{4m+1\choose 4j}=2^{4m-1}+(-1)^m2^{2m-1}$ and $\sum_{j=0}^m{4m+1\choose 4j+2}=\sum_{j=0}^m{4m+1\choose 4j+3}=2^{4m-1}-(-1)^m2^{2m-1}$. It follows (a) and (b).

Similarly, we get (c) (d) and (e) when we consider $(1+i)^{4m+2}$ and get (f) and (g) when we consider $(1+i)^{4m+3}$.
\end{proof}
By applying Lemmas \ref{lem-z-6} and \ref{lem-z-7}, we get the following combination identity.
\begin{thm}\label{thm-z-2}
For two positive integers $k$ and $m$ such that $k\le m$, we have
\begin{equation}\label{eq-2-1}
\sum_{t=0}^m\sum_{j=0}^{2k}{4k\choose 2j}{4m-4k+1\choose 4t-2j+1}=2^{4m-2}+(-1)^m2^{2m-1}.
\end{equation}
\end{thm}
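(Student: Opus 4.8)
The plan is to reduce the double sum to a product of two single sums, each of which is evaluated by the identities of Lemmas \ref{lem-z-6} and \ref{lem-z-7}. Write $p=m-k\ge 0$, so that the second binomial coefficient becomes ${4p+1\choose 4t-2j+1}$. First I would interchange the order of summation, summing over $t$ for each fixed $j$; the inner sum is then $\sum_{t=0}^{m}{4p+1\choose 4t-2j+1}$. Since $2j$ is even, the lower index $4t-2j+1$ is always odd, and its residue modulo $4$ is governed by the parity of $j$: it is $\equiv 1\pmod 4$ when $j$ is even and $\equiv 3\pmod 4$ when $j$ is odd. This suggests splitting the outer summation into its even part $j=2s$ ($0\le s\le k$) and its odd part $j=2s+1$ ($0\le s\le k-1$).

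For the even part, writing $j=2s$ gives index $4(t-s)+1$; as $t$ runs over $[0,m]$ the nonzero terms are exactly those with $t-s\in\{0,1,\ldots,p\}$, so after re-indexing the inner sum equals $\sum_{u=0}^{p}{4p+1\choose 4u+1}$, which by Lemma \ref{lem-z-7}(a) equals $2^{4p-1}+(-1)^p2^{2p-1}$. For the odd part, writing $j=2s+1$ gives index $4(t-s-1)+3$, and the nonzero terms correspond to $t-s-1\in\{0,\ldots,p-1\}$, so the inner sum equals $\sum_{u=0}^{p}{4p+1\choose 4u+3}=2^{4p-1}-(-1)^p2^{2p-1}$ by Lemma \ref{lem-z-7}(b) (the $u=p$ term vanishes). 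The crucial point is that, because $k\le m$, each of these $t$-ranges lies inside $[0,m]$ and captures every admissible index exactly once; this is precisely where the hypothesis $k\le m$ is used.

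After this step the inner sums no longer depend on $s$, so the double sum collapses to
\[\Big(2^{4p-1}+(-1)^p2^{2p-1}\Big)\sum_{s=0}^{k}{4k\choose 4s}+\Big(2^{4p-1}-(-1)^p2^{2p-1}\Big)\sum_{s=0}^{k-1}{4k\choose 4s+2}.\]
Here I would apply Lemma \ref{lem-z-6} to get $\sum_{s=0}^{k}{4k\choose 4s}=2^{4k-2}+(-1)^k2^{2k-1}$ and $\sum_{s=0}^{k-1}{4k\choose 4s+2}=\sum_{s=0}^{k}{4k\choose 4s+2}=2^{4k-2}-(-1)^k2^{2k-1}$ (the $s=k$ term of the latter is zero). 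Finally, expanding the product via $(a+b)(c+d)+(a-b)(c-d)=2ac+2bd$ with $a=2^{4p-1}$, $b=(-1)^p2^{2p-1}$, $c=2^{4k-2}$, $d=(-1)^k2^{2k-1}$ yields $2^{4(p+k)-2}+(-1)^{p+k}2^{2(p+k)-1}$, and substituting $p+k=m$ gives the claimed value $2^{4m-2}+(-1)^m2^{2m-1}$.

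The main obstacle is the bookkeeping in the middle step: one must confirm that interchanging the summations and re-indexing really turns the inner $t$-sum into a complete residue-class sum of binomial coefficients of $4p+1$, neither omitting a term nor double-counting, and that this holds uniformly for all $s$ in the relevant range. Everything else is a routine application of the two combinatorial lemmas followed by elementary algebra.
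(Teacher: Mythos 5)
Your proposal is correct and follows essentially the same route as the paper: split the $j$-sum by parity, re-index the inner $t$-sum so that it becomes a complete residue-class sum evaluated by Lemma \ref{lem-z-7}(a)(b), evaluate the remaining $j$-sums by Lemma \ref{lem-z-6}, and expand the resulting product. The only cosmetic difference is that you interchange the order of summation explicitly and write $p=m-k$, whereas the paper substitutes $t'=t-i$ directly; the computations are identical.
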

\begin{proof}
Note that $\{j\mid j\in Z, 0\le j\le 2k\}=\{2i\mid i\in Z, 0\le i\le k\}\cup \{2i+1\mid i\in Z, 0\le i\le k-1\}$. We have
\[\begin{array}{lll}
&&\displaystyle\sum_{t=0}^m\sum_{j=0}^{2k}{4k\choose 2j}{4m-4k+1\choose 4t-2j+1}\\
&=&\displaystyle\sum_{i=0}^{k}{4k\choose 4i}\sum_{t=0}^m{4(m-k)+1\choose 4(t-i)+1}+\sum_{i=0}^{k}{4k\choose 4i+2}\sum_{t=0}^m{4(m-k)+1\choose 4(t-i)-1}\\
&=&\displaystyle\sum_{i=0}^{k}{4k\choose 4i}\sum_{t'=-i}^{m-i}{4(m-k)+1\choose 4t'+1}+\sum_{i=0}^{k}{4k\choose 4i+2}\sum_{t'=-i}^{m-i}{4(m-k)+1\choose 4t'-1}\\
&=&\displaystyle\sum_{i=0}^{k}{4k\choose 4i}\sum_{t'=0}^{m-k}{4(m-k)+1\choose 4t'+1}+\sum_{i=0}^{k}{4k\choose 4i+2}\sum_{t'=0}^{m-k}{4(m-k)+1\choose 4t'-1},\\
\end{array}\]
the third equality holds because ${4(m-k)+1\choose 4t'+1}=0$ and ${4(m-k)+1\choose 4t'-1}=0$ if $t'<0$ or $t'>m-k$. From Lemmas \ref{lem-z-6} and \ref{lem-z-7}, we have
\[\begin{array}{lll}
&&\displaystyle\sum_{i=0}^{k}{4k\choose 4i}\sum_{t'=0}^{m-k}{4(m-k)+1\choose 4t'+1}+\sum_{i=0}^{k}{4k\choose 4i+2}\sum_{t'=0}^{m-k}{4(m-k)+1\choose 4t'-1}\\
&=&\displaystyle(2^{4k-2}+(-1)^k2^{2k-1})(2^{4(m-k)-1}+(-1)^{m-k}2^{2(m-k)-1})\\&&+(2^{4k-2}-(-1)^k2^{2k-1})(2^{4(m-k)-1}-(-1)^{m-k}2^{2(m-k)-1})\\
&=&\displaystyle2^{4m-2}+(-1)^m2^{2m-1}.
\end{array}\]

It completes the proof.
\end{proof}
By the similar methods, we obtain the following combination identities.
\begin{thm}\label{thm-z-3}
For two positive integers $k$ and $m$ such that $k\le m$, we have\\
(i) $\displaystyle 2\sum_{t=0}^m\sum_{j=0}^{2k}{4k+1\choose 2j}{4m-4k-1\choose 4t-2j}=2^{4m-2}+(-1)^m2^{2m-1}$;\\
(ii) $\displaystyle \sum_{t=0}^m\sum_{j=0}^{2k}{4k+2\choose 2j+1}{4m-4k-1\choose 4t-2j}=2^{4m-2}+(-1)^m2^{2m-1}$;\\
(iii) $\displaystyle 2\sum_{t=0}^m\sum_{j=0}^{2k+1}{4k+3\choose 2j+1}{4m-4k-3\choose 4t-2j-1}=2^{4m-2}+(-1)^m2^{2m-1}$;\\
(iv) $\displaystyle \sum_{t=0}^m\sum_{j=0}^{2k}{4k\choose 2j}{4m-4k+3\choose 4t-2j+1}=2^{4m}+(-1)^m2^{2m}$;\\
(v) $\displaystyle 2\sum_{t=0}^m\sum_{j=0}^{2k}{4k+1\choose 2j}{4m-4k+1\choose 4t-2j}=2^{4m}+(-1)^m2^{2m}$;\\
(vi) $\displaystyle \sum_{t=0}^m\sum_{j=0}^{2k}{4k+2\choose 2j+1}{4m-4k+1\choose 4t-2j}=2^{4m}+(-1)^m2^{2m}$;\\
(vii) $\displaystyle 2\sum_{t=0}^m\sum_{j=0}^{2k+1}{4k+3\choose 2j+1}{4m-4k-1\choose 4t-2j-1}=2^{4m}+(-1)^m2^{2m}$;\\
(viii) $\displaystyle \sum_{t=0}^m\sum_{j=0}^{2k}{4k\choose 2j}{4m-4k+3\choose 4t-2j}=2^{4m}-(-1)^m2^{2m}$;\\
(ix) $\displaystyle 2\sum_{t=0}^m\sum_{j=0}^{2k+1}{4k+3\choose 2j}{4m-4k-1\choose 4t-2j}=2^{4m}-(-1)^m2^{2m}$;\\
(x) $\displaystyle 2\sum_{t=0}^m\sum_{j=0}^{2k}{4k+1\choose 2j+1}{4m-4k+1\choose 4t-2j-1}=2^{4m}-(-1)^m2^{2m}$;\\
(xi) $\displaystyle \sum_{t=0}^m\sum_{j=0}^{2k}{4k+2\choose 2j+1}{4m-4k+1\choose 4t-2j+3}=2^{4m}-(-1)^m2^{2m}$.
\end{thm}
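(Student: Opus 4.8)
The plan is to treat all eleven identities by the single uniform scheme already used for Theorem \ref{thm-z-2}, changing only the bookkeeping of residues modulo $4$ from case to case. Each left-hand side is a double sum whose inner summand is a product ${a\choose 2j+\epsilon}{b\choose 4t-2j+\delta}$, where $a\in\{4k,4k+1,4k+2,4k+3\}$, $b=4m-a+c$ for an explicit constant $c$, and $\epsilon\in\{0,1\}$, $\delta\in\mathbb{Z}$ are read directly off the statement. First I would split the summation over $j$ according to the parity of $j$, writing $j=2i$ and $j=2i+1$. This replaces ${a\choose 2j+\epsilon}$ by the two families ${a\choose 4i+\epsilon}$ and ${a\choose 4i+\epsilon+2}$, whose sums over $i$ are precisely the quantities evaluated in Lemmas \ref{lem-z-6} and \ref{lem-z-7}; for the relevant values of $a$ and $\epsilon$ these two sums differ, so each has the shape $2^{p}+(-1)^{k}2^{q}$ and $2^{p}-(-1)^{k}2^{q}$.

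Next, in each of the two resulting pieces I would substitute $t'=t-i$. Since a binomial coefficient is $0$ outside its natural range, the shifted inner sum $\sum_{t'}{b\choose 4t'+r}$ may be extended to run over a full range independent of $i$, exactly as in the proof of Theorem \ref{thm-z-2}, where the limits of the $t'$-sum are enlarged using this vanishing convention. The two parities of $j$ contribute the residues $r\equiv\delta$ and $r\equiv\delta-2\pmod 4$ in the second binomial, which again land on two distinct clauses of Lemma \ref{lem-z-6} or \ref{lem-z-7} of the form $2^{p'}+(-1)^{m-k}2^{q'}$ and $2^{p'}-(-1)^{m-k}2^{q'}$ (possibly with $m-k$ replaced by $m-k-1$, according to how $b$ is written as $4m'+s$). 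This decouples the double sum into a sum of two products, each a product of one $i$-sum with one $t'$-sum, and substituting the closed forms from the two lemmas turns everything into a combination of products of binomials of the two displayed shapes.

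The final step is the elementary collapse $(A+B)(C+D)+(A-B)(C-D)=2(AC+BD)$, or its twin $(A+B)(C-D)+(A-B)(C+D)=2(AC-BD)$, according to whether the parity split pairs like signs or opposite signs. The product $AC$ is a pure power of $2$, forcing the leading exponent $2^{4m}$ or $2^{4m-2}$ and dictating the overall factor of $2$ present on the left of identities (i), (iii), (v), (vii), (ix), (x); the cross term $BD$ collects the signs $(-1)^{k}$ and $(-1)^{m-k}$ (or $(-1)^{m-k-1}$) and, together with the sign of the collapse, always reduces to a multiple of $(-1)^{m}$, which is exactly why every right-hand side has the form $2^{4m}\pm(-1)^{m}2^{2m}$ or $2^{4m-2}\pm(-1)^{m}2^{2m-1}$. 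I expect the work to be organizational rather than conceptual: the main obstacle is keeping the residue-and-sign bookkeeping correct across the eleven cases, since one miscounted shift invokes the wrong clause of Lemma \ref{lem-z-6} or \ref{lem-z-7} and spoils the final cancellation. Once the residues $\delta$ and $\delta-2\pmod 4$ are tabulated for each case, each identity reduces to the same two-line computation exhibited for Theorem \ref{thm-z-2}.
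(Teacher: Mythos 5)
Your proposal is correct and follows exactly the paper's own argument: split the $j$-sum by parity into residues $0,2$ (or $1,3$) mod $4$, shift the $t$-index and extend its range via the vanishing convention for out-of-range binomials, evaluate each factor by Lemmas \ref{lem-z-6} and \ref{lem-z-7}, and collapse the resulting $(A\pm B)(C\mp D)$ products so that the cross terms combine the signs $(-1)^k$ and $(-1)^{m-k}$ into $(-1)^m$. The paper carries this out explicitly only for case (i) and asserts the remaining ten are analogous, just as you propose.
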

\begin{proof}
As similar as the proof of Theorem \ref{thm-z-2}, we prove these identities by two steps. At first, by using $\{j\mid j\in Z, 0\le j\le 2k\}=\{2i\mid i\in Z, 0\le i\le k\}\cup \{2i+1\mid i\in Z, 0\le i\le k-1\}$, the left side of each identity can be written as the sum of two parts. Next, by using the identities in Lemmas \ref{lem-z-6} and \ref{lem-z-7}, we obtain the identity.

For example, the left side of (i) can be written as
\[\begin{array}{lll}
&&\displaystyle \sum_{t=0}^m\sum_{j=0}^{2k}{4k+1\choose 2j}{4m-4k-1\choose 4t-2j}\\
&=&\displaystyle \sum_{i=0}^{4k}{4k+1\choose 4i}\sum_{t=0}^m{4(m-k)-1\choose 4(t-i)}+\sum_{i=0}^{4k}{4k+1\choose 4i+2}\sum_{t=0}^m{4(m-k)-1\choose 4(t-i)-2}\\
&=&\displaystyle \sum_{i=0}^{4k}{4k+1\choose 4i}\sum_{t'=0}^{m-k-1}{4(m-k-1)+3\choose 4t'}+\sum_{i=0}^{4k}{4k+1\choose 4i+2}\sum_{t'=0}^{m-k-1}{4(m-k-1)+3\choose 4t'+2}.
\end{array}\]
By Lemma \ref{lem-z-7} (a) (b) (f) and (g), we have
\[\begin{array}{lll}
&&\displaystyle \sum_{i=0}^{4k}{4k+1\choose 4i}\sum_{t'=0}^{m-k-1}{4(m-k-1)+3\choose 4t'}+\sum_{i=0}^{4k}{4k+1\choose 4i+2}\sum_{t'=0}^{m-k-1}{4(m-k-1)+3\choose 4t'+2}\\
&=&(2^{4k-1}+(-1)^k2^{2k-1})(2^{4(m-k-1)+1}-(-1)^{m-k-1}2^{2(m-k-1)})\\
&&+(2^{4k-1}-(-1)^k2^{2k-1})(2^{4(m-k-1)+1}+(-1)^{m-k-1}2^{2(m-k-1)})\\
&=&2^{4m-3}+(-1)^m2^{2m-2}.\end{array}\]
It leads to (i).

By these same processes, one can verify the other identities one by one.
\end{proof}
At last, we mention that  (i), (ii) and (iii) of Theorem \ref{thm-z-3} and the identity of Theorem \ref{thm-z-2} give four distinct combination representations but achieve the same value $2^{4m-2}+(-1)^m2^{2m-1}$;  the next four identities and the last four identities of Theorem \ref{thm-z-3} also give four distinct combination representations but achieve the same values $2^{4m}+(-1)^m2^{2m}$ and $2^{4m}-(-1)^m2^{2m}$, respectively. Such a properties are important to verify strongly regular orbit Cayley graphs in the next section.

\section{Strongly regular orbit Cayley graphs}
A connected $r$-regular graph on $n$ vertices is said to be \emph{strongly regular} with parameter $(n,r,\lambda,\mu)$ if two vertices $x$ and $y$ have $\lambda$ common neighbors whenever they are adjacent and $\mu$ common neighbors whenever they are not adjacent. A strongly regular graph is said to be \emph{trivial} if its complement is disconnected. It is clear that the complement of a trivial strongly regular graph is the union of some isomorphic complete graphs \cite{Godsil}. We first characterize the trivial strongly regular orbit Cayley graphs over $Z_2^n$.
\begin{thm}\label{thm-z-4}
Let $S^-=\cup_{i=1}^{n-1}O_i$ and $S^o=\cup_{i=1}^{\lceil n/2\rceil}O_{2i-1}$ be two subsets of $Z_2^n$. The orbit Cayley graph $Cay(Z_2^n,S)$ is a trivial strongly regular Cayley graph if and only if $S\in \{S^-,S^o\}$. Furthermore, $Cay(Z_2^n,S^-)$ is $(2^n, 2^n-2,2^n-4,2^n-2)$-strongly regular and $Cay(Z_2^n,S^o)$ is $(2^n,2^{n-1},0,2^{n-1})$-strongly regular.
\end{thm}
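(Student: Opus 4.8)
The plan is to recast the word ``trivial'' in group-theoretic terms, reduce the problem to classifying the subgroups of $Z_2^n$ that are unions of orbits, and then read off the two admissible subgroups together with the parameters of the corresponding graphs. First I would use that the complement of $Cay(Z_2^n,S)$ is again an orbit Cayley graph, namely $Cay(Z_2^n,\overline S)$ with $\overline S=Z_2^n\setminus(S\cup\{\mathbf 0\})$, and that $Cay(Z_2^n,S)$ is a trivial strongly regular graph exactly when this complement is a disjoint union of (at least two) complete graphs, each of order at least two. Checking the relation $x\approx y\Leftrightarrow x+y\in\overline S\cup\{\mathbf 0\}$ for reflexivity, symmetry and transitivity shows that $Cay(Z_2^n,\overline S)$ is a disjoint union of cliques if and only if
\[
H:=\overline S\cup\{\mathbf 0\}=Z_2^n\setminus S
\]
is a subgroup of $Z_2^n$, in which case the cliques are precisely the cosets of $H$. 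Hence $Cay(Z_2^n,S)$ is a trivial strongly regular graph exactly when $H=Z_2^n\setminus S$ is a proper nontrivial subgroup, i.e. $\{\mathbf 0\}\subsetneq H\subsetneq Z_2^n$. Since $S$ is a union of orbits, so is $H$, and the task reduces to determining all proper nontrivial subgroups of $Z_2^n$ that are unions of the orbits $O_i$.

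The main step is this classification, and it is where the real work lies. Writing $H=\cup_{i\in J}O_i$ with $0\in J$, I would first invoke Lemma \ref{lem-z-1}: if $J$ contained an odd index $i$ with $1\le i\le n-1$, then $O_i\subseteq H$ would force $H\supseteq\langle O_i\rangle=Z_2^n$, contradicting properness. Thus every nonzero orbit inside $H$ has index either even or equal to $n$. Next I would record the auxiliary generation fact that $\langle O_i\rangle=E$ for every even $i$ with $2\le i\le n-1$, where $E=\cup_{i\,\mathrm{even}}O_i$ is the even-weight subgroup: taking $u,v\in O_i$ sharing exactly $i-1$ ones gives $u+v\in O_2$, so the invariant subgroup $\langle O_i\rangle$ contains a weight-two vector and hence all of $O_2$; since the vectors $e_1+e_j$ span $E$ we have $\langle O_2\rangle=E$, while $\langle O_i\rangle\subseteq E$ for even $i$, so equality follows.

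Combining these, if $H$ contains any orbit $O_i$ with $i\notin\{0,n\}$, then that $i$ must be even and in $[2,n-1]$, whence $E\subseteq H$; as $E$ has index two in $Z_2^n$, properness forces $H=E$. In the remaining case $J\subseteq\{0,n\}$, so $H=\{\mathbf 0\}$ or $H=\{\mathbf 0,\mathbf 1\}$, and nontriviality gives $H=\langle\mathbf 1\rangle$. Therefore the only proper nontrivial orbit-union subgroups are $E$ and $D:=\langle\mathbf 1\rangle=\{\mathbf 0,\mathbf 1\}$, which is exactly the dichotomy needed.

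Finally I would translate back and compute parameters, which is routine. The case $H=D$ gives $S=Z_2^n\setminus\{\mathbf 0,\mathbf 1\}=\cup_{i=1}^{n-1}O_i=S^-$, so $Cay(Z_2^n,S^-)$ is $K_{2^n}$ with the perfect matching $\{v,v+\mathbf 1\}$ deleted; two adjacent vertices $x,y$ exclude exactly $x,y,x+\mathbf 1,y+\mathbf 1$ as common neighbours and two nonadjacent vertices $x,x+\mathbf 1$ exclude only themselves, giving $(2^n,2^n-2,2^n-4,2^n-2)$. The case $H=E$ gives $S=Z_2^n\setminus E=\cup_{i\,\mathrm{odd}}O_i=\cup_{i=1}^{\lceil n/2\rceil}O_{2i-1}=S^o$, and since $x\sim y$ iff $|x|$ and $|y|$ have opposite parity, $Cay(Z_2^n,S^o)$ is the complete bipartite graph $K_{2^{n-1},2^{n-1}}$ with parameters $(2^n,2^{n-1},0,2^{n-1})$. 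The hardest part is the subgroup classification; its two delicate points are that Lemma \ref{lem-z-1} deliberately excludes the index $i=n$, so the orbit $O_n=\{\mathbf 1\}$ (which generates only $D$) must be treated separately, and that both parities of $n$ must be handled uniformly, which the generation facts above make possible.
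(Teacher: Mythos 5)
Your proof is correct, and it reaches the theorem by a somewhat more structural route than the paper. The paper works directly on the complement: disconnectedness of $\overline{X}$ forces $\overline{S}$ to contain no odd-indexed orbit or to equal $O_n$ (via Corollary \ref{cor-z-1}), and completeness of the component through $\mathbf{0}$ is then asserted to force $\overline{S}$ to contain every even orbit. You instead observe that $\overline{X}$ is a disjoint union of cliques precisely when $H=Z_2^n\setminus S$ is a subgroup, and then classify all proper nontrivial subgroups of $Z_2^n$ that are unions of orbits, using the generation fact $\langle O_i\rangle=E$ for even $i$ with $2\le i\le n-1$. This buys two things the paper leaves implicit: it cleanly isolates the orbit $O_n=\{\mathbf{1}\}$, which is an even orbit when $n$ is even yet generates only $\{\mathbf{0},\mathbf{1}\}$ (so the paper's claim that $\overline{S}$ must contain all even orbits silently needs the case split you make explicit), and it actually justifies why a union of even orbits other than $O_n$ alone can only close up to all of $E$. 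Your parameter computations ($K_{2^n}$ minus a perfect matching, and $K_{2^{n-1},2^{n-1}}$) agree with the paper's, which reads them off from $\overline{Cay(Z_2^n,S^-)}=2^{n-1}K_2$ and $\overline{Cay(Z_2^n,S^o)}=2K_{2^{n-1}}$.
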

\begin{proof}
We have proved that $Cay(Z_2^n,S^-)$ is a trivial strongly regular graph in Example \ref{exa-3}. Similarly, one can verify that the complement of $Cay(Z_2^n,S^o)$ is $2K_{2^{n-1}}$ and thus it is a trivial strongly regular graph. Therefore, the sufficiency follows and we shall show the necessity in what follows. Let $X=Cay(Z_2^n,S)$ be a trivial strongly regular orbit Cayley graph. Therefore, $\overline{X}$ is disconnected and thus $\overline{S}=Z_2^{n}\setminus(\{\mathbf{0}\}\cup S)$ does not contain $O_i$ for any odd $i$ or $\overline{S}=O_n$. If the latter occurs, then we have $S=S^-$. If the former occurs, then $\overline{S}$ is the union of some $O_{2i}$. In fact, $\overline{S}$ contains all $O_{i}$ for even $i$ since otherwise the component $Cay(\langle\overline{S}\rangle,\overline{S})$ of $\overline{X}$ cannot be complete. It follows that $S=S^o$. Besides, one can easily get the parameters of $Cay(Z_2^n,S^-)$ and $Cay(Z_2^n,S^-)$ because $\overline{Cay(Z_2^n,S^-)}=2^{n-1}K_2$ and $\overline{Cay(Z_2^n,S^-)}=2K_{2^{n-1}}$.
\end{proof}

In what follows, we consider the non-trivial strongly regular orbit Cayley graphs. At first we give a criterion for a vertex transitive graph to be strongly regular.
\begin{lem}\label{lem-z-8}
Let $\Gamma$ be a connected vertex transitive graph on $n$ vertices. Then $\Gamma$ is strongly regular if and only if the partition $\Pi:$ $V(\Gamma)=\{v\}\cup N(v)\cup N^2(v)$ is an equitable partition for some $v\in V(\Gamma)$.
\end{lem}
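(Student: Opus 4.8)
The plan is to verify both directions directly from the definitions, using vertex transitivity crucially in the ``if'' direction to promote the local equitable structure at a single vertex to a global statement about all pairs of vertices. First I would record the elementary observation that a connected strongly regular graph has diameter at most $2$: if $\mu\ge 1$ then any two non-adjacent vertices share a neighbor, so every vertex lies within distance $2$ of $v$; if $\mu=0$ then connectedness forces $\Gamma$ to be complete and $N^2(v)=\emptyset$. In either case $\{v\}\cup N(v)\cup N^2(v)$ genuinely partitions $V(\Gamma)$, so the statement of the lemma is meaningful.

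For the ``only if'' direction, assume $\Gamma$ is strongly regular with parameters $(n,r,\lambda,\mu)$ and fix any $v$. I would compute the nine intersection numbers of the three-part partition. A neighbor $u\in N(v)$ is adjacent to $v$, so $u$ and $v$ have $\lambda$ common neighbors, all lying in $N(v)$; hence $u$ has exactly $1$ neighbor in $\{v\}$, exactly $\lambda$ in $N(v)$, and the remaining $r-1-\lambda$ in $N^2(v)$. Likewise a vertex $w\in N^2(v)$ is non-adjacent to $v$, giving $\mu$ common neighbors (all in $N(v)$), so $w$ has $0$ neighbors in $\{v\}$, $\mu$ in $N(v)$, and $r-\mu$ in $N^2(v)$. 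Since these counts are independent of the choice of $u$ and $w$, the partition is equitable.

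For the ``if'' direction, suppose $\Pi:V(\Gamma)=\{v\}\cup N(v)\cup N^2(v)$ is equitable for some $v$, and let $b_{11}$ (resp. $b_{21}$) denote the constant number of neighbors in $N(v)$ of a vertex of $N(v)$ (resp. of $N^2(v)$). Vertex transitivity gives regularity, say of degree $r$, and more importantly lets me transport the equitable partition: for any vertex $v'$ there is an automorphism carrying $v$ to $v'$, hence carrying $\Pi$ to the analogous partition $\{v'\}\cup N(v')\cup N^2(v')$ with the same intersection numbers. I would then read off common-neighbor counts by centering the partition appropriately. For adjacent $x\sim y$, the common neighbors of $x$ and $y$ are exactly the neighbors of $y$ lying in $N(x)$; since $y\in N(x)$, equitability of the partition centered at $x$ makes this count equal to $b_{11}$, independent of the pair. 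For non-adjacent $x,z$, the diameter bound forces $z\in N^2(x)$, and the common neighbors of $x$ and $z$ are the neighbors of $z$ in $N(x)$, which by equitability equals $b_{21}$. Thus $\Gamma$ is strongly regular with $\lambda=b_{11}$ and $\mu=b_{21}$.

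The main obstacle I expect is conceptual rather than computational: in the ``if'' direction the hypothesis only supplies an equitable partition anchored at one vertex, whereas strong regularity is a statement about every pair of vertices. Bridging this gap is exactly where vertex transitivity does the work, allowing the local intersection numbers to be relocated to a partition centered at whichever endpoint is convenient. A secondary point to handle carefully is ensuring the three sets really do exhaust $V(\Gamma)$ --- equivalently that the eccentricity of $v$ is at most $2$ --- which is implicit in reading ``$V(\Gamma)=\{v\}\cup N(v)\cup N^2(v)$'' as a partition and which, by transitivity, holds at one vertex iff it holds at all.
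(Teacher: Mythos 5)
Your proposal is correct and follows essentially the same route as the paper: vertex transitivity supplies regularity and transports the equitable partition (equivalently, moves an arbitrary adjacent or non-adjacent pair to one anchored at $v$), so the common-neighbor counts equal the fixed quotient-matrix entries. You additionally spell out the intersection numbers for the "only if" direction and the diameter-two point, which the paper leaves as known, but the argument is the same.
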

\begin{proof}
It is known that if $\Gamma$ is strongly regular, then the partition $\Pi$: $V(\Gamma)=\{v\}\cup N(v)\cup N^2(v)$ is equitable for any $v\in V(\Gamma)$. The necessity follows. In what follows, we show the sufficiency. It is clear that $\Gamma$ is $r$-regular for some $r$ because it is vertex transitive. Since $\Pi$ is an equitable partition, assume the quotient matrix is
\[B_{\Pi}=\left(\begin{array}{ccc}0&b_{12}&b_{13}\\b_{21}&b_{22}&b_{23}\\b_{31}&b_{32}&b_{33}\end{array}\right),\]
where $b_{12}+b_{13}=b_{21}+b_{22}+b_{23}=b_{31}+b_{32}+b_{33}=r$.
For any two vertex $x,y$ such that $x\sim y$, there exists $\sigma\in Aut(\Gamma)$ such that $x^{\sigma}=v$ and thus $y^{\sigma}=v_1$ for some $v_1\in N(v)$. Therefore, we have $|N(x)\cap N(y)|=|(N(x)\cap N(y))^{\sigma}|=|N(x^{\sigma})\cap N(y^{\sigma})|=|N(v)\cap N(v_1)|=b_{22}$. Similarly, for any two vertices $x,y\in V(\Gamma)$ such that $x\not\sim y$, we have $|N(x)\cap N(y)|=|N(v)\cap N(v_2)|=b_{32}$, where $y^{\tau}=v_2\in N^2(v)$ for some $\tau\in Aut(\Gamma)$ and $x^{\tau}=v$. It means that $\Gamma$ is a $(n,r,b_{22},b_{32})$-strongly regular graph.
\end{proof}
For a subset $S$ of a group $G$, let $\overline{S}=G\setminus(S\cup\{1\})$. Denote by $C(v,S)=\{(x,y)\mid x,y\in S \textrm{ and } v=xy\}$. By applying Lemma \ref{lem-z-8}, we get a criterion for a Cayley graph to be strongly regular.

\begin{thm}\label{cor-z-3}
Let $S$ be a subset of a group $G$ such that the identity element $1\not\in S$ and $S=S^{-1}$. Then the Cayley graph $Cay(G,S)$ is $(|G|,|S|,\lambda,\mu)$-strongly regular if and only if $|C(v,S)|=\lambda$ for any $v\in S$ and $|C(v',S)|=\mu$ for any $v'\in \overline{S}$.
\end{thm}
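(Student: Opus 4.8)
The plan is to translate the two defining conditions of a strongly regular graph---that adjacent vertices share exactly $\lambda$ common neighbors and non-adjacent vertices share exactly $\mu$---into statements about the sets $C(v,S)$, exploiting the vertex transitivity of Cayley graphs. First I would record that $Cay(G,S)$ is vertex transitive: for each $g\in G$ the left translation $x\mapsto gx$ is an automorphism, and it carries $1$ to $g$. Consequently the number of common neighbors of two vertices $g$ and $h$ depends only on $g^{-1}h$, so it suffices to count the common neighbors of the identity $1$ and an arbitrary vertex $v$.

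The heart of the argument is a bijection between the common neighbors of $1$ and $v$ and the set $C(v,S)$. A vertex $w$ is a common neighbor of $1$ and $v$ precisely when $w\in S$ and $v^{-1}w\in S$. Given such a $w$, set $x=w$ and $y=w^{-1}v$; then $x\in S$, and since $S=S^{-1}$ we have $y=(v^{-1}w)^{-1}\in S$, while $xy=w\,w^{-1}v=v$, so $(x,y)\in C(v,S)$. Conversely, from $(x,y)\in C(v,S)$ the vertex $w=x$ satisfies $w\in S$ and $v^{-1}w=y^{-1}\in S$, hence is a common neighbor. These two assignments are mutually inverse, so $|N(1)\cap N(v)|=|C(v,S)|$, and by transitivity $|N(g)\cap N(h)|=|C(g^{-1}h,S)|$ for all $g,h\in G$.

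With this identity in hand I would finish by a case split on adjacency. Since $1\sim v$ exactly when $v\in S$, and $1\not\sim v$ with $v\neq 1$ exactly when $v\in\overline{S}$, the requirements ``every adjacent pair has $\lambda$ common neighbors'' and ``every non-adjacent pair has $\mu$ common neighbors'' become, through the displayed count and vertex transitivity, precisely ``$|C(v,S)|=\lambda$ for all $v\in S$'' and ``$|C(v',S)|=\mu$ for all $v'\in\overline{S}$''. Observing that $Cay(G,S)$ has $|G|$ vertices and is $|S|$-regular (because $S=S^{-1}$ and $1\notin S$) then yields both implications simultaneously.

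The main point requiring care is the connectivity clause built into the definition of a strongly regular graph: to read off the parameters through the equitable three-cell partition of Lemma \ref{lem-z-8} one must first know that $V(\Gamma)=\{v\}\cup N(v)\cup N^2(v)$, i.e.\ that the graph has diameter at most two. I expect the cleanest route is to note that the $C(v,S)$ conditions already pin down all common-neighbor counts directly, so the graph meets the combinatorial definition of strong regularity without invoking Lemma \ref{lem-z-8} at all; connectivity then follows since $\mu\ge 1$ forces any two non-adjacent vertices to lie at distance two, while the degenerate case $\mu=0$ (a disjoint union of complete graphs) is handled separately.
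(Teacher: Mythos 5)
Your proposal is correct, and its core is the same as the paper's: both proofs rest on the identity $|N(1)\cap N(v)|=|C(v,S)|$ (the paper phrases it as $|N(v)\cap S|=|C(v,S)|$ with $N(1)=S$, established by the same two mutually inverse assignments $w\mapsto(w,w^{-1}v)$ you describe), and the necessity direction is verbatim the same computation. Where you diverge is the sufficiency direction. The paper assembles the hypothesis into the statement that $\Pi\colon V=\{1\}\cup S\cup\overline{S}$ is an equitable partition with quotient matrix $\left(\begin{smallmatrix}0&|S|&0\\1&\lambda&|S|-1-\lambda\\0&\mu&|S|-\mu\end{smallmatrix}\right)$ and then invokes Lemma \ref{lem-z-8}; you instead transport the common-neighbour count to an arbitrary pair $(g,h)$ by left translation and check the combinatorial definition of strong regularity directly. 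The two routes are logically close (Lemma \ref{lem-z-8} itself is proved by exactly this translation argument), but yours buys something real: Lemma \ref{lem-z-8} is stated for connected graphs and identifies $\overline{S}$ with $N^2(1)$, which silently presupposes diameter two, whereas your direct verification needs neither, and you are the only one to flag the residual issue that connectivity is built into the paper's Section~4 definition of strong regularity. Your handling of that point is the right one --- $\mu\ge 1$ forces diameter two and hence connectivity, and $\mu=0$ gives a disjoint union of cliques, a degenerate case the theorem statement does not really accommodate --- so your version is, if anything, slightly more careful than the paper's.
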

\begin{proof}
At first, we will verify that $|N(v)\cap S|=|C(v,S)|$ for any $v\in G$. On the one hand, for any $x\in N(v)\cap S$, we have $x^{-1}v=y\in S$, that is, $v=xy$. It means that $(x,y)\in C(v,S)$ and thus $|N(v)\cap S|\le |C(v,S)|$. On the other hand, for any $(x,y)\in C(v,S)$, we have $x,y\in S$ and $v=xy$, that is, $x^{-1}v=y\in S$. It means that $x\in N(v)\cap S$ and thus $|C(v,S)|\le |N(v)\cap S|$.

Assume that $X=Cay(G,S)$ is $(|G|,|S|,\lambda,\mu)$-strongly regular. For any $v\in S$, we have $v\sim 1$ and thus $|N(v)\cap N(1)|=\lambda$. Therefore, by noticing $N(1)=S$, we have $|C(v,S)|=|N(v)\cap S|=|N(v)\cap N(1)|=\lambda$. 
For any $v'\in \overline{S}$, we have $v'\not\sim 1$ and thus $|N(v')\cap N(1)|=\mu$. Therefore, by noticing $N(1)=S$, we have
$|C(v',S)|=|N(v')\cap S|=|N(v')\cap N(1)|=\mu$.

Conversely, assume that $|C(v,S)|=\lambda$ for any $v\in S$ and $|C(v',S)|=\mu$ for any $v'\in \overline{S}$. It is clear that $X=Cay(G,S)$ is vertex transitive and $\Pi$: $V(X)=\{1\}\cup S\cup \overline{S}$ is a partition.
Moreover, we have  $|N(v)\cap S|=|C(v,S)|=\lambda$ for any $v\in S$ and $|N(v')\cap S|=|C(v',S)|=\mu$ for any $v'\in \overline{S}$. Therefore, we see that $\Pi$ is an equitable partition with quotient matrix
\[B_{\Pi}=\left(\begin{array}{ccc}0&|S|&0\\1&\lambda&|S|-1-\lambda\\0&\mu&|S|-\mu\end{array}\right)\begin{array}{c}\{1\}\\S\\\overline{S}\end{array}.\]
Thus, Lemma \ref{lem-z-8} implies that $X$ is $(|G|,|S|,\lambda,\mu)$-strongly regular.
\end{proof}

\begin{remark}
Easily to show that a vertex-transitive graph with diameter two is strongly regular if it is arc-transitive. Lemma \ref{lem-z-8} gives a sufficient and necessary condition for a vertex transitive graph to be strongly regular, and Theorem \ref{cor-z-3} also  gives a sufficient and necessary condition for a Cayley graph to be strongly regular.  Such a sufficient and necessary condition is essentially  to verify wether $V(X)=\{1\}\cup N(1)\cup N^2(1)$ is an equitable partition, which  is really weaker than the arc-transitivity.
\end{remark}

Now we focus on the orbit Cayley graphs over $Z_2^n$. Recall that $O_i$ are the orbits of $Z_2^{n}$ under the action of $W$ for $0\le i\le n$. Denote by $S_0=\cup_{i\ne 0,i\equiv 0(\mod 4)}O_i$, $S_1=\cup_{i\equiv 1(\mod 4)}O_i$, $S_2=\cup_{i\equiv 2(\mod 4)}O_i$ and $S_3=\cup_{i\equiv 3(\mod 4)}O_i$.
\begin{lem}\label{lem-z-9}
Let $v$ be an element of $Z_2^{4m}$. If $v\in S_0\cup S_1$ then
\[|C(v,S_0\cup S_1)|=2^{4m-2}+(-1)^m2^{2m-1}-2.\]
\end{lem}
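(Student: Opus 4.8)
The plan is to compute $|C(v, S_0\cup S_1)|$ as a weighted count and reduce it to the mod-$4$ binomial identities already established. Write $S=S_0\cup S_1$. Since $Z_2^{4m}$ has characteristic $2$, the defining relation $v=xy$ becomes $y=v+x$, so each pair in $C(v,S)$ is determined by its first coordinate and $|C(v,S)|=|\{x\in S:v+x\in S\}|$. Because $S$ is a union of the orbits $O_i$ and every permutation matrix in $W$ fixes $S$ while permuting $O_{|v|}$ transitively, the map $(x,y)\mapsto(Px,Py)$ shows that $|C(v,S)|$ depends only on the weight $w=|v|$; hence I may take $v$ to be the vector whose support is the first $w$ coordinates.

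Next I would parametrize the candidate $x$ by how its support meets that of $v$: let $a=|x\wedge v|$ be the number of $1$'s of $x$ inside $\mathrm{supp}(v)$ and $b$ the number outside. Then $|x|=a+b$, $|v+x|=(w-a)+b$, and exactly $\binom{w}{a}\binom{4m-w}{b}$ vectors $x$ realize a given pair $(a,b)$. The membership conditions $x\in S$ and $v+x\in S$ translate into the congruences $a+b\equiv 0,1\pmod 4$ and $(w-a)+b\equiv 0,1\pmod 4$, together with the exclusions $x\neq\mathbf{0}$ and $x\neq v$ forced by $S_0$ omitting $O_0$. Since both excluded vectors, namely $\mathbf{0}$ (at $(a,b)=(0,0)$) and $v$ (at $(a,b)=(w,0)$), satisfy the two congruences whenever $w\equiv 0,1\pmod 4$, each contributing a single unit term, I obtain
\[
|C(v,S)|=N-2,\qquad N=\sum_{\substack{a+b\equiv 0,1\,(4)\\ (w-a)+b\equiv 0,1\,(4)}}\binom{w}{a}\binom{4m-w}{b}.
\]

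It then remains to show $N=2^{4m-2}+(-1)^m2^{2m-1}$, splitting into the cases $w\equiv 0\pmod 4$ (so $v\in S_0$) and $w\equiv 1\pmod 4$ (so $v\in S_1$). In each case a short check of residues modulo $4$ shows that the admissible pairs $(a\bmod 4,b\bmod 4)$ decouple: the permitted residue of $b$ is governed solely by that of $a$, so $N$ factors as a sum of two products of one-variable sums of the shape $\sum_{a\equiv *}\binom{w}{a}$ and $\sum_{b\equiv *}\binom{4m-w}{b}$. Each such sum is evaluated by Lemma \ref{lem-z-6} or Lemma \ref{lem-z-7} (applied with parameter $k$ to the $a$-sum and $m-k$ to the $b$-sum when $w=4k$ or $w=4k+1$), and the two products combine — exactly as in the proofs of Theorems \ref{thm-z-2} and \ref{thm-z-3}, the Pascal identity $\sum_{b\equiv 0,1\,(4)}\binom{n}{b}=\sum_{b\equiv 1\,(4)}\binom{n+1}{b}$ being what recasts $N$ into their packaged double-sum form — so that the $k$-dependent factors cancel and the result is $2^{4m-2}+(-1)^m2^{2m-1}$ in both cases. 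Subtracting the correction $2$ gives the claim.

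The main obstacle I anticipate is the residue bookkeeping in this last step: pinning down the admissible $(a\bmod 4,b\bmod 4)$ pairs exactly, tracking the signs $(-1)^k$ and $(-1)^{m-k}$ so that the cross terms cancel and the surviving terms collapse to $2^{4m-2}+(-1)^m2^{2m-1}$, and confirming that this \emph{same} value genuinely emerges in both weight cases $w\equiv 0$ and $w\equiv 1\pmod 4$ despite the different applicable parts of Lemmas \ref{lem-z-6} and \ref{lem-z-7}.
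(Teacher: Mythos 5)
Your proposal is correct and follows essentially the same route as the paper: you parametrize $x$ by the overlap of its support with that of $v$ (your $(a,b)$ is the paper's $(i,|x|-i)$), count with $\binom{w}{a}\binom{4m-w}{b}$, subtract $2$ for the excluded pairs involving $\mathbf{0}$, and reduce the remaining sum via the mod-$4$ residue analysis to the identities of Theorem \ref{thm-z-2} and Theorem \ref{thm-z-3}(i), exactly as the paper does in its two cases $|v|\equiv 0$ and $|v|\equiv 1\pmod 4$. The residue bookkeeping you flag does go through: for $w=4k$ the admissible pairs are $(a\equiv 0,\,b\equiv 0,1)$ and $(a\equiv 2,\,b\equiv 2,3)$, and for $w=4k+1$ they are $(a\equiv 0,1,\,b\equiv 0)$ and $(a\equiv 2,3,\,b\equiv 2)$, after which Lemmas \ref{lem-z-6} and \ref{lem-z-7} give $N=2AB+2cd=2^{4m-2}+(-1)^m2^{2m-1}$ in both cases.
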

\begin{proof} We divide two cases to discuss.

{\flushleft\bf Case 1.} $v\in S_0$.\\
In this case, assume that $v=4k$ where $1\le k\le m$. For $(x,y)\in C(v,S_0\cup S_1)$, we have $v=x+y$ and thus $x,y\in S_0$ or $x,y\in S_1$. Suppose that $x$ has $i$'s coordinates of $1$ coinciding with that of $v$, that is, $i=v^Tx$.

If $x,y\in S_0$ then $|x|=4t$ and $|y|=4s$ for some $0\le t,s\le m$ ($x,y\ne \mathbf{0}$). Note that $y=v+x$ due to $v=x+y$. By our assumption, $4s=|y|=|v+x|=(4k-i)+(4t-i)$. It follows that $i$ is even, say $i=2j$ where $0\le j\le r_{k,t}=\min\{2k,2t\}$. Therefore, $x$ has exactly ${4k\choose 2j}{4m-4k\choose 4t-2j}$ possible choices and $y$ is uniquely determined whenever $x$ is chosen. Note that $x,y\ne\mathbf{0}$ and so $(v,\mathbf{0})$ and $(\mathbf{0},v)$ should be excluded. Thus, there are exactly $\sum_{t=0}^m\sum_{j=0}^{r_{k,t}}{4k\choose 2j}{4m-4k\choose 4t-2j}-2=\sum_{t=0}^m\sum_{j=0}^{2k}{4k\choose 2j}{4m-4k\choose 4t-2j}-2$ pairs of such $(x,y)$ as ${4k\choose 2j}{4m-4k\choose 4t-2j}=0$ if $j>r_{k,t}$.

If $x,y\in S_1$ then $|x|=4t+1$ and $y=4s+1$ for some $0\le t,s\le m-1$. Note that $y=v+x$ due to $v=x+y$. By our assumption, $4s+1=|y|=|v+x|=(4k-i)+(4t+1-i)$. It follows that $i$ is also even, say $i=2j$ where $0\le j\le r_{k,t}=\min\{2k,2t\}$. Therefore, $x$ has exactly ${4k\choose 2j}{4m-4k\choose 4t+1-2j}$ possible choices and $y$ is uniquely determined whenever $x$ is chosen. Thus, there are exactly $\sum_{t=0}^m\sum_{j=0}^{r_{k,t}}{4k\choose 2j}{4m-4k\choose 4t+1-2j}=\sum_{t=0}^m\sum_{j=0}^{2k}{4k\choose 2j}{4m-4k\choose 4t+1-2j}$ pairs of such $(x,y)$ as ${4k\choose 2j}{4m-4k\choose 4t+1-2j}=0$ if $j>r_{k,t}$.

By arguments above, we have
\[\begin{array}{lll}|C(v,S_0\cup S_1)|&=&\displaystyle \sum_{t=0}^m\sum_{j=0}^{2k}{4k\choose 2j}{4m-4k\choose 4t-2j}+\sum_{t=0}^m\sum_{j=0}^{2k}{4k\choose 2j}{4m-4k\choose 4t+1-2j}-2\\
&=&\displaystyle\sum_{t=0}^m\sum_{j=0}^{2k}{4k\choose 2j}{4m-4k+1\choose 4t-2j+1}-2.\end{array}\]
By Theorem \ref{thm-z-2}, we have
\[|C(v,S_0\cup S_1)|=2^{4m-2}+(-1)^m2^{2m-1}-2.\]

{\flushleft\bf Case 2. } $v\in S_1$.\\
In this case, assume that $|v|=4k+1$ where $0\le k\le m-1$. For $(x,y)\in C(v,S_0\cup S_1)$, we have $v=x+y$ and thus $x\in S_0$, $y\in S_1$ or $x\in S_1$, $y\in S_0$. Suppose that $x$ has $i$'s coordinates of $1$ coinciding with that of $v$, that is, $i=v^Tx$.

If $x\in S_0$ and $y\in S_1$ then $|x|=4t$ and $y=4s+1$ for some $0\le t\le m$ and $0\le s\le m-1$. Note that $y=v+x$ due to $v=x+y$. By our assumption, $4s+1=|y|=|v+x|=(4k+1-i)+(4t-i)$. It follows that $i$ is even, say $i=2j$ where $0\le j\le r_{k,t}=\min\{2k,2t\}$. Therefore, $x$ has exactly ${4k+1\choose 2j}{4m-4k-1\choose 4t-2j}$ possible choices and $y$ is uniquely determined when $x$ is chosen. Note that $x\ne \mathbf{0}$ and so $(\mathbf{0},v)$ should be excluded. Thus, there are exactly $\sum_{t=0}^m\sum_{j=0}^{r_{k,t}}{4k+1\choose 2j}{4m-4k-1\choose 4t-2j}-1=\sum_{t=0}^m\sum_{j=0}^{2k}{4k+1\choose 2j}{4m-4k-1\choose 4t-2j}-1$ pairs of such $(x,y)$ as ${4k+1\choose 2j}{4m-4k-1\choose 4t-2j}=0$ if $j>r_{k,t}$. By the symmetry of $x$ and $y$, there are also $\sum_{t=0}^m\sum_{j=0}^{2k}{4k+1\choose 2j}{4m-4k-1\choose 4t-2j}-1$ pairs of $(x,y)$ such that $v=x+y$, $x\in S_1$ and $y\in S_0$.

By arguments above, we have
\[|C(v,S_0\cup S_1)|=2\sum_{t=0}^m\sum_{j=0}^{2k}{4k+1\choose 2j}{4m-4k-1\choose 4t-2j}-2.\]
By Theorem \ref{thm-z-3} (i), we have
\[|C(v,S_0\cup S_1)|=2^{4m-2}+(-1)^m2^{2m-1}-2.\]

It completes the proof.
\end{proof}
\begin{lem}\label{lem-z-10}
Let $v$ be an element of $Z_2^{4m}$. If $v\in \overline{S_0\cup S_1}=S_2\cup S_3$, then \[|C(v,S_0\cup S_1)|=2^{4m-2}+(-1)^m2^{2m-1}.\]
\end{lem}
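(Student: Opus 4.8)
The plan is to mirror the proof of Lemma~\ref{lem-z-9}, now taking $v\in S_2\cup S_3$, and to keep track of why the answer loses the $-2$ correction. For every pair $(x,y)\in C(v,S_0\cup S_1)$ we have $y=v+x$, so if $i=v^Tx$ denotes the number of coordinates at which $v$ and $x$ are both $1$, then the weight identity $|y|=|v|+|x|-2i$ holds. Reading this identity modulo $4$ against the two allowed residues of $|x|$ and $|y|$ (namely $0$ and $1$, since $x,y\in S_0\cup S_1$) will determine both the parity of $i$ and the admissible pairs of ``types'' of $x$ and $y$. Once the surviving configurations are identified, a choice of $x$ with prescribed weight $|x|$ and overlap $i$ can be made in ${|v|\choose i}{4m-|v|\choose |x|-i}$ ways, and $y=v+x$ is then forced, so the count collapses to a double sum over $|x|$ and $i$ that is handled by Theorem~\ref{thm-z-3}.

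First I would treat $v\in S_2$, writing $|v|=4k+2$ with $0\le k\le m-1$. The relation $|y|=(4k+2)+|x|-2i$ forces $x$ and $y$ to share the same type and forces $i$ to be odd, say $i=2j+1$; in particular $x$ already carries a $1$ inside the support of $v$, so $x\ne\mathbf{0}$. Setting $|x|=4t$ in the $S_0$--$S_0$ subcase and $|x|=4t+1$ in the $S_1$--$S_1$ subcase gives the two contributions $\sum_{t}\sum_{j}{4k+2\choose 2j+1}{4m-4k-2\choose 4t-2j-1}$ and $\sum_{t}\sum_{j}{4k+2\choose 2j+1}{4m-4k-2\choose 4t-2j}$. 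I would then combine them through Pascal's rule ${4m-4k-2\choose 4t-2j-1}+{4m-4k-2\choose 4t-2j}={4m-4k-1\choose 4t-2j}$, obtaining exactly the left-hand side of Theorem~\ref{thm-z-3}(ii), whose value is $2^{4m-2}+(-1)^m2^{2m-1}$.

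Next I would treat $v\in S_3$, writing $|v|=4k+3$ with $0\le k\le m-1$. Here the modular analysis forces $x$ and $y$ to be of different types, one lying in $S_0$ and the other in $S_1$. Counting the configuration $x\in S_0,\ y\in S_1$, in which $i=v^Tx$ is odd, contributes $\sum_{t}\sum_{j}{4k+3\choose 2j+1}{4m-4k-3\choose 4t-2j-1}$; the complementary configuration $x\in S_1,\ y\in S_0$ is obtained from it by the involution $(x,y)\mapsto(y,x)$ of $C(v,S_0\cup S_1)$ and so contributes equally. Doubling yields $2\sum_{t}\sum_{j}{4k+3\choose 2j+1}{4m-4k-3\choose 4t-2j-1}$, which is precisely the left-hand side of Theorem~\ref{thm-z-3}(iii), again equal to $2^{4m-2}+(-1)^m2^{2m-1}$.

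The step I expect to require the most care is the parity bookkeeping: one must check that only ``same type'' pairs occur when $v\in S_2$ and only ``opposite type'' pairs occur when $v\in S_3$, and one must align the index ranges of $t$ and $j$ so that the double sums coincide verbatim with the identities of Theorem~\ref{thm-z-3}. The conceptual point, however, is the disappearance of the $-2$: because $i$ is forced to be odd we always have $i\ge 1$, so $x\ne\mathbf{0}$, and $y=\mathbf{0}$ is impossible since it would force $|x|=|v|$ with $|x|\not\equiv|v|\pmod 4$. Hence neither $(v,\mathbf{0})$ nor $(\mathbf{0},v)$ is ever counted, unlike Case~1 of Lemma~\ref{lem-z-9}, and this is exactly what separates the value here from the one in Lemma~\ref{lem-z-9} by $2$.
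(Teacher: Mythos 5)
Your proposal is correct and follows essentially the same route as the paper: the same overlap parameter $i=v^Tx$ with $y=v+x$ forced, the same case split ($v\in S_2$ giving same-type pairs, $v\in S_3$ giving opposite-type pairs doubled by the swap involution), the same Pascal-rule merge, and the same appeal to Theorem~\ref{thm-z-3}(ii) and (iii). Your explicit remark on why the $-2$ correction of Lemma~\ref{lem-z-9} disappears (since $v\notin S_0\cup S_1$ the degenerate pairs never arise) is a nice addition the paper leaves implicit.
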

\begin{proof}
We divide two cases to discuss.
{\flushleft\bf Case 1. } $v\in S_2$.\\
In this case, assume that $|v|=4k+2$ where $0\le k\le m-1$. For $(x,y)\in C(v,S_0\cup S_1)$, we have $v=x+y$ and thus $x,y\in S_0$ or $x,y\in S_1$. Suppose that $x$ has $i$'s coordinates of $1$ coinciding with that of $v$, that is, $i=v^Tx$.

If $x,y\in S_0$ then $|x|=4t$ and $|y|=4s$ for some $0\le t,s\le m$. Note that $y=v+x$ due to $v=x+y$. By our assumption, $4s=|y|=|v+x|=(4k+2-i)+(4t-i)$. It follows that $i$ is odd, say $i=2j+1$ where $0\le j\le r_{k,t}=\min\{2k,2t\}$. Therefore, $x$ has exactly ${4k+2\choose 2j+1}{4m-4k-2\choose 4t-2j-1}$ possible choices and $y$ is uniquely determined whenever $x$ is chosen. Thus, there are exactly $\sum_{t=0}^m\sum_{j=0}^{r_{k,t}}{4k+2\choose 2j+1}{4m-4k-2\choose 4t-2j-1}=\sum_{t=0}^m\sum_{j=0}^{2k}{4k+2\choose 2j+1}{4m-4k-2\choose 4t-2j-1}$ pairs of such $(x,y)$.

If $x,y\in S_1$ then $|x|=4t+1$ and $|y|=4s+1$ for some $0\le t,s\le m$. Note that $y=v+x$ due to $v=x+y$. By our assumption, $4s+1=|y|=|v+x|=(4k+2-i)+(4t+1-i)$. It follows that $i$ is also odd, say $i=2j+1$ where $0\le j\le r_{k,t}=\min\{2k,2t\}$. Therefore, $x$ has exactly ${4k+2\choose 2j+1}{4m-4k-2\choose 4t-2j}$ possible choices and $y$ is uniquely determined whenever $x$ is chosen. Thus, there are exactly $\sum_{t=0}^m\sum_{j=0}^{r_{k,t}}{4k+2\choose 2j+1}{4m-4k-2\choose 4t-2j}=\sum_{t=0}^m\sum_{j=0}^{2k}{4k+2\choose 2j+1}{4m-4k-2\choose 4t-2j}$ pairs of such $(x,y)$.

By arguments above, we have
\[\begin{array}{lll}|C(v,S_0\cup S_1)|&=&\displaystyle \sum_{t=0}^m\sum_{j=0}^{2k}{4k+2\choose 2j+1}{4m-4k-2\choose 4t-2j-1}+\sum_{t=0}^m\sum_{j=0}^{2k}{4k+2\choose 2j+1}{4m-4k-2\choose 4t-2j}\\
&=&\sum_{t=0}^m\sum_{j=0}^{2k}{4k+2\choose 2j+1}{4m-4k-1\choose 4t-2j}.\end{array}\]
By Theorem \ref{thm-z-3} (ii), we have
\[|C(v,S_0\cup S_1)|=2^{4m-2}+(-1)^m2^{2m-1}.\]

{\flushleft\bf Case 2. } $v\in S_3$.\\
In this case, assume that $|v|=4k+3$ where $0\le k\le m-1$. For $(x,y)\in C(v,S_0\cup S_1)$, we have $v=x+y$ and thus $x\in S_0$, $y\in S_1$ or $x\in S_1$, $y\in S_0$. Suppose that $x$ has $i$'s coordinates of $1$ coinciding with that of $v$, that is, $i=v^Tx$.

If $x\in S_0$ and $y\in S_1$ then $|x|=4t$ and $|y|=4s+1$ for some $0\le t\le m$ and $0\le s\le m-1$. Note that $y=v+x$ due to $v=x+y$. By our assumption, $4s+1=|y|=|v+x|=(4k+3-i)+(4t-i)$. It follows that $i$ is odd, say $i=2j+1$ where $0\le j\le r_{k,t}=\min\{2k+1,2t-1\}$. Therefore, $x$ has exactly ${4k+3\choose 2j+1}{4m-4k-3\choose 4t-2j-1}$ possible choices and $y$ is uniquely determined whenever $x$ is chosen. Thus, there are exactly $\sum_{t=0}^{m}\sum_{j=0}^{r_{k,t}}{4k+3\choose 2j+1}{4m-4k-3\choose 4t-2j-1}=\sum_{t=0}^{m}\sum_{j=0}^{2k}{4k+3\choose 2j+1}{4m-4k-3\choose 4t-2j-1}$ pairs of such $(x,y)$. By the symmetry of $x$ and $y$, there are also $\sum_{t=0}^{m}\sum_{j=0}^{2k}{4k+3\choose 2j+1}{4m-4k-3\choose 4t-2j-1}$ pairs of $(x,y)$ such that $v=x+y$ and $x\in S_1$ and $y\in S_0$.

By arguments above, we have
\[\displaystyle |C(v,S_0\cup S_1)|=2\sum_{t=0}^{m}\sum_{j=0}^{2k}{4k+3\choose 2j+1}{4m-4k-3\choose 4t-2j-1}.\]
By Theorem \ref{thm-z-3} (iii), we have
\[|C(v,S_0\cup S_1)|=2^{4m-2}+(-1)^m2^{2m-1}.\]

It completes the proof.
\end{proof}
Note that, by Lemma \ref{lem-z-7} (a), $|S_0\cup S_1|=\sum_{i=1}^m{4m\choose 4i}+\sum_{i=0}^m{4m\choose 4i+1}=\sum_{i=0}^m\left({4m\choose 4i}+{4m\choose 4i+1}\right)-1=\sum_{i=0}^m{4m+1\choose 4i+1}-1=2^{4m-1}+(-1)^m2^{2m-1}-1$. Combining Lemmas \ref{lem-z-9}, \ref{lem-z-10} and Theorem \ref{cor-z-3}, we have the following result.
\begin{thm}\label{thm-z-5}
The orbit Cayley graph $Cay(Z_2^{4m},S_0\cup S_1)$ is strongly regular with parameter
\[\left(2^{4m},2^{4m-1}+(-1)^m2^{2m-1}-1,2^{4m-2}+(-1)^m2^{2m-1}-2,2^{4m-2}+(-1)^m2^{2m-1}\right)\]
for any positive integer $m$.
\end{thm}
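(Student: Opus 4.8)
The plan is to read this off as an immediate consequence of the criterion in Theorem \ref{cor-z-3}, fed by the two counting lemmas just proved; the substance is already contained in Lemmas \ref{lem-z-9} and \ref{lem-z-10}, so the theorem itself is a matter of checking hypotheses and collecting parameters.

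First I would confirm the two standing assumptions of Theorem \ref{cor-z-3} for $S=S_0\cup S_1$. In the elementary abelian group $Z_2^{4m}$ every element is its own inverse, so $S=S^{-1}$ holds automatically; and since $S_0$ is defined to omit $O_0$, the identity $\mathbf{0}$ is not in $S$. I would also observe, for the connectedness implicit in being strongly regular, that $O_1\subseteq S_1\subseteq S$, whence $Cay(Z_2^{4m},S)$ is connected by Corollary \ref{cor-z-1}.

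Next I would pin down the four parameters. The vertex count is $|Z_2^{4m}|=2^{4m}$, and the degree is $|S|=2^{4m-1}+(-1)^m2^{2m-1}-1$, as computed in the remark preceding the theorem via Lemma \ref{lem-z-7}(a). Lemma \ref{lem-z-9} supplies $|C(v,S)|=2^{4m-2}+(-1)^m2^{2m-1}-2$ for every $v\in S$, which is the common value $\lambda$, and Lemma \ref{lem-z-10} supplies $|C(v',S)|=2^{4m-2}+(-1)^m2^{2m-1}$ for every $v'\in\overline{S}=S_2\cup S_3$, which is the common value $\mu$. Applying Theorem \ref{cor-z-3} with these four numbers then gives precisely the asserted parameter tuple.

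Because everything rests on the earlier lemmas, there is no genuine obstacle internal to this theorem; the only subtlety worth flagging is the \emph{uniformity} of the counts, namely that $|C(v,S)|$ does not depend on which of $O_{4k}$ or $O_{4k+1}$ the representative $v$ comes from (and likewise for $v'$ among the orbits $O_{4k+2}$ and $O_{4k+3}$). This is exactly what the coincidence of the several distinct combinatorial expressions for $2^{4m-2}+(-1)^m2^{2m-1}$ in Theorem \ref{thm-z-2} and Theorem \ref{thm-z-3}(i)--(iii) guarantees, as noted at the close of Section 3; so I would make sure those identities are being invoked consistently when assembling $\lambda$ and $\mu$.
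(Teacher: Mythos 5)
Your proposal is correct and follows essentially the same route as the paper: the paper's proof likewise computes $|S_0\cup S_1|$ from Lemma \ref{lem-z-7}(a) and then combines Lemmas \ref{lem-z-9} and \ref{lem-z-10} with the criterion of Theorem \ref{cor-z-3}. Your additional checks (that $S=S^{-1}$, that $\mathbf{0}\notin S$, and connectedness via $O_1\subseteq S_1$) are sound and merely make explicit what the paper leaves implicit.
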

Note that the complement of a non-trivial $(n,r,\lambda,\mu)$-strongly regular graph is an $(n,n-1-r,n-2-2r+\mu,n-2r+\lambda)$-strongly regular \cite{Godsil}. Since $\overline{Cay(Z_2^{4m},S_0\cup S_1)}=Cay(Z_2^{4m},S_2\cup S_3)$, we get the following result.
\begin{cor}\label{cor-z-4}
The orbit Cayley graph $Cay(Z_2^{4m},S_2\cup S_3)$ is strongly regular with parameter
\[\left(2^{4m},2^{4m-1}-(-1)^m2^{2m-1},2^{4m-2}-(-1)^m2^{2m-1},2^{4m-2}-(-1)^m2^{2m-1}\right)\]
 for any positive integer $m$.
\end{cor}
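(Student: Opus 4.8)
The plan is to obtain this statement as a direct consequence of Theorem \ref{thm-z-5} together with the standard behaviour of strongly regular parameters under graph complementation. First I would record the set-theoretic identity $\overline{S_0\cup S_1}=S_2\cup S_3$ in $Z_2^{4m}$. Indeed, the orbits $O_0,O_1,\ldots,O_{4m}$ partition $Z_2^{4m}$, and sorting the indices $i$ according to $i\bmod 4$ shows that $S_0,S_1,S_2,S_3$ are pairwise disjoint with $S_0\cup S_1\cup S_2\cup S_3=Z_2^{4m}\setminus\{\mathbf 0\}$; hence the connection set of the complementary graph, namely $Z_2^{4m}\setminus(\{\mathbf 0\}\cup(S_0\cup S_1))$, is precisely $S_2\cup S_3$. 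Consequently $\overline{Cay(Z_2^{4m},S_0\cup S_1)}=Cay(Z_2^{4m},S_2\cup S_3)$, as already observed in the remark preceding the statement.

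Next I would check that the strongly regular graph of Theorem \ref{thm-z-5} is non-trivial, so that the complement formula is applicable. Since $4m\ge 3$ for every positive integer $m$, the orbit $O_3$ exists and $O_3\subseteq S_3\subseteq S_2\cup S_3$; thus $S_2\cup S_3$ contains an odd orbit, and Corollary \ref{cor-z-1} guarantees that $Cay(Z_2^{4m},S_2\cup S_3)$ is connected. This is exactly the assertion that $Cay(Z_2^{4m},S_0\cup S_1)$ has connected complement, i.e.\ is non-trivial, which licenses the use of the quoted parameter transformation.

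Finally I would invoke the complement formula, namely that a non-trivial $(n,r,\lambda,\mu)$-strongly regular graph has complement that is $(n,\,n-1-r,\,n-2-2r+\mu,\,n-2r+\lambda)$-strongly regular. Taking $(n,r,\lambda,\mu)$ to be the parameters of $Cay(Z_2^{4m},S_0\cup S_1)$ supplied by Theorem \ref{thm-z-5} and substituting, the degree becomes $2^{4m-1}-(-1)^m2^{2m-1}$ at once, while the two remaining entries both collapse to $2^{4m-2}-(-1)^m2^{2m-1}$ after cancelling the $2^{4m}$ terms and combining the $(-1)^m2^{2m-1}$ and $(-1)^m2^{2m}$ contributions. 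The only real work is this last bookkeeping of signs and powers of two; there is no combinatorial obstacle here, since all the hard enumeration has already been carried out in Lemmas \ref{lem-z-9} and \ref{lem-z-10} and packaged into Theorem \ref{thm-z-5}.
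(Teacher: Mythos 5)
Your proposal is correct and follows essentially the same route as the paper, which likewise obtains Corollary \ref{cor-z-4} by applying the complement parameter formula $(n,\,n-1-r,\,n-2-2r+\mu,\,n-2r+\lambda)$ to the parameters of $Cay(Z_2^{4m},S_0\cup S_1)$ from Theorem \ref{thm-z-5}, using $\overline{Cay(Z_2^{4m},S_0\cup S_1)}=Cay(Z_2^{4m},S_2\cup S_3)$. Your explicit verification of non-triviality via Corollary \ref{cor-z-1} is a small point of care that the paper leaves implicit, but it does not change the argument.
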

By similar methods, we consider the orbit Cayley graphs $Cay(Z_2^{4m+2},S_0\cup S_1)$ and $Cay(Z_2^{4m+2},S_0\cup S_3)$.
\begin{lem}\label{lem-z-11}
Let $v$ be an element of $Z_2^{4m+2}$. If $v\in S_0\cup S_1$, then
\[|C(v,S_0\cup S_1)|=2^{4m}+(-1)^m2^{2m}-2.\]
\end{lem}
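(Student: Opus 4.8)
The plan is to mirror the proof of Lemma \ref{lem-z-9} almost verbatim, the only structural change being that the ambient dimension is now $4m+2$ instead of $4m$. This shifts the sizes of the complementary binomial blocks and, correspondingly, replaces the appeals to Theorem \ref{thm-z-2} and Theorem \ref{thm-z-3}(i) by appeals to Theorem \ref{thm-z-3}(iv) and (v). I would split the argument according to whether $v\in S_0$ or $v\in S_1$, and in each case count the pairs $(x,y)$ with $v=x+y$ by introducing the overlap parameter $i=v^Tx$ (the number of common $1$'s of $v$ and $x$) and using the basic relation $|y|=|v+x|=|v|+|x|-2i$.

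First I would handle Case 1, $v\in S_0$, where $|v|=4k$ with $1\le k\le m$. Here $v=x+y$ forces $x,y$ into the same class, either both in $S_0$ or both in $S_1$, and in either sub-case the relation $|y|=|v|+|x|-2i$ shows $i$ must be even, say $i=2j$. Selecting $2j$ of the $4k$ ones of $v$ and the remaining ones of $x$ among its $4m+2-4k=4(m-k)+2$ zeros yields the counts $\binom{4k}{2j}\binom{4(m-k)+2}{4t-2j}$ and $\binom{4k}{2j}\binom{4(m-k)+2}{4t+1-2j}$, with $y$ uniquely determined. Summing over $t,j$, subtracting $2$ to exclude the pairs $(v,\mathbf{0})$ and $(\mathbf{0},v)$, and collapsing the two inner binomials via Pascal's rule $\binom{4m-4k+2}{4t-2j}+\binom{4m-4k+2}{4t-2j+1}=\binom{4m-4k+3}{4t-2j+1}$, I reduce $|C(v,S_0\cup S_1)|$ to $\sum_{t}\sum_{j}\binom{4k}{2j}\binom{4m-4k+3}{4t-2j+1}-2$, which Theorem \ref{thm-z-3}(iv) evaluates to $2^{4m}+(-1)^m2^{2m}-2$.

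Next I would handle Case 2, $v\in S_1$, where $|v|=4k+1$ with $0\le k\le m$. Now $v=x+y$ forces exactly one of $x,y$ into $S_0$ and the other into $S_1$, so by the symmetry of the roles of $x$ and $y$ it suffices to count the pairs with $x\in S_0$, $y\in S_1$ and double. The parity check again gives $i=2j$ even, and the number of admissible $x$ with $|x|=4t$ is $\binom{4k+1}{2j}\binom{4(m-k)+1}{4t-2j}$, the complementary block now having size $4m+2-(4k+1)=4(m-k)+1$; here one subtracts $1$ to discard the pair $(\mathbf{0},v)$. Doubling produces $2\sum_{t}\sum_{j}\binom{4k+1}{2j}\binom{4m-4k+1}{4t-2j}-2$, and Theorem \ref{thm-z-3}(v) collapses this to the same value $2^{4m}+(-1)^m2^{2m}-2$, matching Case 1.

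The arithmetic is routine once the setup is fixed; the one genuinely delicate point—and the reason the two cases must be verified independently—is that the two \emph{different} double sums produced in Case 1 and Case 2 have to reduce to the \emph{same} closed form. This is precisely the phenomenon noted after Theorem \ref{thm-z-3}: identities (iv) and (v) are distinct combinatorial representations deliberately engineered to share the common value $2^{4m}+(-1)^m2^{2m}$. Thus the main effort lies in tracking the parity of $i$ correctly and in applying Pascal's identity so that the resulting sum exactly matches the form demanded by (iv) and (v); no identity beyond those already established is required.
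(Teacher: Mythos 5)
Your proposal is correct and follows essentially the same route as the paper: the paper's own proof of this lemma is a near-verbatim transposition of Lemma \ref{lem-z-9} to dimension $4m+2$, producing exactly the double sums you describe and closing them with Theorem \ref{thm-z-3}(iv) for $v\in S_0$ and (v) for $v\in S_1$. Your bookkeeping (the complementary block sizes $4(m-k)+2$ and $4(m-k)+1$, the parity of $i$, the subtraction of the degenerate pairs, and the Pascal-rule merge into the form required by (iv)) matches the paper's computation.
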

\begin{proof}
As similar as the proof of Lemma \ref{lem-z-9}, if $v\in S_0$, one can verify that there are $\sum_{t=0}^m\sum_{j=0}^{2k}{4k\choose 2j}{4m-4k+2\choose 4t-2j}-2$ pairs of $(x,y)\in C(v,S_0\cup S_1)$ such that $x,y\in S_0$ and there are $\sum_{t=0}^m\sum_{j=0}^{2k}{4k\choose 2j}{4m-4k+2\choose 4t+1-2j}$ pairs of $(x,y)\in C(v,S_0\cup S_1)$ such that $x,y\in S_1$. Therefore,  from Theorem (\ref{thm-z-3}) (iv), we have
\[\begin{array}{lll}
|C(v,S_0\cup S_1)|&=&\displaystyle\sum_{t=0}^m\sum_{j=0}^{2k}{4k\choose 2j}{4m-4k+2\choose 4t-2j}+\sum_{t=0}^m\sum_{j=0}^{2k}{4k\choose 2j}{4m-4k+2\choose 4t+1-2j}-2\\
&=&\displaystyle\sum_{t=0}^m\sum_{j=0}^{2k}{4k\choose 2j}{4m-4k+3\choose 4t-2j+1}-2\\
&=&2^{4m}+(-1)^m2^{2m}-2.\end{array}\]

If $v\in S_1$, there are $\sum_{t=0}^m\sum_{j=0}^{2k}{4k+1\choose 2j}{4m-4k+1\choose 4t-2j}-1$ pairs of $(x,y)\in C(v,S_0\cup S_1)$ such that $x\in S_0$ and $y\in S_1$. By symmetry of $x$ and $y$, there are also $\sum_{t=0}^m\sum_{j=0}^{2k}{4k+1\choose 2j}{4m-4k+1\choose 4t-2j}-1$ pairs of $(x,y)\in C(v,S_0\cup S_1)$ such that $y\in S_0$ and $x\in S_1$. Therefore, from Theorem \ref{thm-z-3} (v), we also have
\[|C(v,S_0\cup S_1)|=2\sum_{t=0}^m\sum_{j=0}^{2k}{4k+1\choose 2j}{4m-4k+1\choose 4t-2j}-2=2^{4m}+(-1)^m2^{2m}-2.\]

It completes the proof.
\end{proof}
\begin{lem}\label{lem-z-12}
Let $v$ be an element of $Z_2^{4m+2}$. If $v\in \overline{S_0\cup S_1}=S_2\cup S_3$, then
\[|C(v,S_0\cup S_1)|=2^{4m}+(-1)^m2^{2m}.\]
\end{lem}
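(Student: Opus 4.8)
The plan is to follow the same strategy as in the proof of Lemma \ref{lem-z-10}, now carried out over $Z_2^{4m+2}$ and invoking the identities (vi) and (vii) of Theorem \ref{thm-z-3} in place of (ii) and (iii). I would split the argument according to whether $v\in S_2$ or $v\in S_3$. In each case I enumerate the pairs $(x,y)\in C(v,S_0\cup S_1)$ by writing $y=v+x$ and recording $i=v^Tx$, the number of coordinates in which $x$ carries a $1$ on a one-position of $v$. The basic relation $|y|=|v|+|x|-2i$ then fixes the parity of $i$ once the residues of $|x|,|y|,|v|$ modulo $4$ are chosen, and the number of admissible $x$ with prescribed $|x|$ and prescribed $i$ is ${|v|\choose i}{(4m+2)-|v|\choose |x|-i}$, since $i$ of the $|v|$ one-positions and $|x|-i$ of the $(4m+2)-|v|$ zero-positions of $v$ must be set to $1$.

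First I would treat Case 1, $v\in S_2$, so $|v|=4k+2$ with $0\le k\le m$. As $|v|$ is even, $|x|+|y|$ is even, forcing $x,y$ to lie together in $S_0$ or together in $S_1$, while the mixed possibility $x\in S_0,\,y\in S_1$ is excluded. In both admissible subcases $|y|=|v|+|x|-2i$ forces $i$ odd, say $i=2j+1$, and $v$ has $(4m+2)-(4k+2)=4(m-k)$ zero-coordinates. Summing the two subcases and collapsing by Pascal's rule gives
\[\sum_{t=0}^m\sum_{j=0}^{2k}{4k+2\choose 2j+1}\left({4m-4k\choose 4t-2j-1}+{4m-4k\choose 4t-2j}\right)=\sum_{t=0}^m\sum_{j=0}^{2k}{4k+2\choose 2j+1}{4m-4k+1\choose 4t-2j},\]
which by Theorem \ref{thm-z-3} (vi) equals $2^{4m}+(-1)^m2^{2m}$.

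Next I would treat Case 2, $v\in S_3$, so $|v|=4k+3$ with $0\le k\le m-1$. Now $|v|$ is odd, so exactly one of $x,y$ lies in $S_0$ and the other in $S_1$. Taking $x\in S_0,\,y\in S_1$ again forces $i$ odd, $i=2j+1$, with $(4m+2)-(4k+3)=4m-4k-1$ free coordinates, contributing $\sum_{t=0}^m\sum_{j=0}^{2k+1}{4k+3\choose 2j+1}{4m-4k-1\choose 4t-2j-1}$ pairs; the symmetric split $x\in S_1,\,y\in S_0$ contributes the same, so the total is twice this sum, and Theorem \ref{thm-z-3} (vii) shows it is $2^{4m}+(-1)^m2^{2m}$ as well, completing the proof. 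I would also note that here, unlike in Lemma \ref{lem-z-11}, no degenerate pairs need to be discarded: since $v\in S_2\cup S_3$ we have $v\notin S_0\cup S_1$ and $\mathbf{0}\notin S_0\cup S_1$, so neither $(\mathbf{0},v)$ nor $(v,\mathbf{0})$ is ever counted, which is precisely why no $-2$ correction appears.

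The argument is structurally routine given Lemma \ref{lem-z-10}, so I do not expect a genuine conceptual obstacle; the hard part will be the parity-and-range bookkeeping. Specifically, I must deduce from $|y|=|v|+|x|-2i$ that $i$ is \emph{odd} in every admissible subcase, select the correct upper limit for $j$ (namely $2k$ in Case 1 and $2k+1$ in Case 2), and verify that each collapsed double sum matches, index for index, the left-hand side of the intended identity in Theorem \ref{thm-z-3}, so that (vi) and (vii) apply verbatim.
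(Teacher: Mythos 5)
Your proposal is correct and follows essentially the same route as the paper: the paper's proof likewise splits into $v\in S_2$ and $v\in S_3$, arrives at exactly the sums $\sum_{t=0}^m\sum_{j=0}^{2k}{4k+2\choose 2j+1}{4m-4k+1\choose 4t-2j}$ and $2\sum_{t=0}^m\sum_{j=0}^{2k+1}{4k+3\choose 2j+1}{4m-4k-1\choose 4t-2j-1}$, and closes with Theorem \ref{thm-z-3} (vi) and (vii). Your parity bookkeeping and the observation that no degenerate pairs occur are consistent with (and slightly more explicit than) the paper, which simply refers back to the argument of Lemma \ref{lem-z-10}.
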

\begin{proof}
As similar as the proof of Lemma \ref{lem-z-10}, if $v\in S_2$, there are $\sum_{t=0}^m\sum_{j=0}^{2k}{4k+2\choose 2j+1}{4m-4k\choose 4t-2j-1}+\sum_{t=0}^m\sum_{j=0}^{2k}{4k+2\choose 2j+1}{4m-4k\choose 4t-2j}$ pairs of $(x,y)\in C(v,S_0\cup S_1)$. By Theorem \ref{thm-z-3} (vi), we have
\[\begin{array}{lll}
|C(v,S_0\cup S_1)|&=&\displaystyle\sum_{t=0}^m\sum_{j=0}^{2k}{4k+2\choose 2j+1}{4m-4k\choose 4t-2j-1}+\sum_{t=0}^m\sum_{j=0}^{2k}{4k+2\choose 2j+1}{4m-4k\choose 4t-2j}\\
&=&\displaystyle\sum_{t=0}^m\sum_{j=0}^{2k}{4k+2\choose 2j+1}{4m-4k+1\choose 4t-2j}\\
&=&2^{4m}+(-1)^m2^{2m}.
\end{array}\]

If $v\in S_3$, there are $2\sum_{t=0}^m\sum_{j=0}^{2k+1}{4k+3\choose 2j+1}{4m-4k-1\choose 4t-2j-1}$ pairs of $(x,y)\in C(v,S_0\cup S_1)$. By Theorem \ref{thm-z-3} (vii), we also have
\[|C(v,S_0\cup S_1)|=2\sum_{t=0}^m\sum_{j=0}^{2k+1}{4k+3\choose 2j+1}{4m-4k-1\choose 4t-2j-1}=2^{4m}+(-1)^m2^{2m}.\]

It completes the proof.
\end{proof}

Note that, from  Lemma \ref{lem-z-7} (g), $|S_0\cup S_1|=\sum_{i=1}^m{4m+2\choose 4i}+\sum_{i=0}^m{4m+2\choose 4i+1}=\sum_{i=0}^m\left({4m+2\choose 4i}+{4m+2\choose 4i+1}\right)-1=\sum_{i=0}^m{4m+3\choose 4i+1}=2^{4m+1}+(-1)^m2^{2m}-1$. Combining  Lemmas \ref{lem-z-11}, \ref{lem-z-12} and Theorem \ref{cor-z-3}, we get the following result.
\begin{thm}\label{thm-z-6}
The orbit Cayley graph $Cay(Z_2^{4m+2},S_0\cup S_1)$ is strongly regular with parameter
\[\left(2^{4m+2},2^{4m+1}+(-1)^m2^{2m}-1,2^{4m}+(-1)^m2^{2m}-2,2^{4m}+(-1)^m2^{2m}\right)\]
for any positive integer $m$.
\end{thm}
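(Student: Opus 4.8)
The plan is to obtain Theorem~\ref{thm-z-6} as a direct application of the strong-regularity criterion of Theorem~\ref{cor-z-3}. That criterion asserts that, for a set $S$ with $1\notin S$ and $S=S^{-1}$, the Cayley graph $Cay(G,S)$ is $(|G|,|S|,\lambda,\mu)$-strongly regular if and only if the integer $|C(v,S)|$ takes the constant value $\lambda$ as $v$ ranges over $S$ and the constant value $\mu$ as $v$ ranges over $\overline S$. Consequently the whole proof reduces to three bookkeeping tasks: verifying the standing hypotheses of the criterion for $G=Z_2^{4m+2}$ and $S=S_0\cup S_1$, and then reading off the three numbers $|S|$, $\lambda$, $\mu$ from work already in hand. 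This mirrors exactly the argument that produced Theorem~\ref{thm-z-5} for $Z_2^{4m}$, only with the residue arithmetic shifted and with the relevant cases of Theorem~\ref{thm-z-3} changed.

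First I would check the two hypotheses of Theorem~\ref{cor-z-3}. The identity $\mathbf 0=O_0$ lies in neither $S_0$ (whose defining index condition excludes $i=0$) nor $S_1$, so $\mathbf 0\notin S$; and since every vector of $Z_2^{4m+2}$ is its own inverse, each orbit $O_i$, and hence $S=S_0\cup S_1$, satisfies $S=S^{-1}$. Next I would record the degree. Writing $|S|=\sum_{i=1}^m{4m+2\choose 4i}+\sum_{i=0}^m{4m+2\choose 4i+1}$, absorbing the missing $i=0$ term of the first sum as a $-1$, and pairing the two sums by Pascal's rule into $\sum_{i=0}^m{4m+3\choose 4i+1}-1$, Lemma~\ref{lem-z-7}(g) evaluates this to $|S|=2^{4m+1}+(-1)^m2^{2m}-1$, the prescribed second parameter.

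The remaining two parameters are supplied verbatim by the counting lemmas. For every $v\in S=S_0\cup S_1$, Lemma~\ref{lem-z-11} gives $|C(v,S)|=2^{4m}+(-1)^m2^{2m}-2$, so this constant is $\lambda$; for every $v\in\overline S=S_2\cup S_3$, Lemma~\ref{lem-z-12} gives $|C(v,S)|=2^{4m}+(-1)^m2^{2m}$, so this constant is $\mu$. Feeding $|G|=2^{4m+2}$ together with these three values into Theorem~\ref{cor-z-3} then yields precisely the claimed tuple. One should also observe that $\mu=2^{4m}+(-1)^m2^{2m}>0$ forces the graph to be connected (equivalently, $O_1\subseteq S$ and Corollary~\ref{cor-z-1} apply), so the result is a bona fide strongly regular graph rather than a degenerate one.

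I do not expect any obstacle at the level of the theorem itself; its genuine content sits upstream in Lemmas~\ref{lem-z-11} and~\ref{lem-z-12}, which I am free to assume. Those lemmas do the real work: for fixed $v$ they split each pair $(x,y)$ with $x+y=v$ according to whether $x,y\in S_0$ or $x,y\in S_1$, parametrize the pairs by the overlap $i=v^Tx$ (forced to a fixed parity according to the residue class of $|v|$), reduce $|C(v,S)|$ to a double binomial sum, and then collapse it through the appropriate identity of Theorem~\ref{thm-z-3}, namely parts (iv) and (v) when $v\in S_0\cup S_1$ and parts (vi) and (vii) when $v\in S_2\cup S_3$. The only genuine subtlety, and the reason $\lambda$ carries a $-2$ while $\mu$ does not, is the need to discard the two degenerate pairs $(\mathbf 0,v)$ and $(v,\mathbf 0)$ exactly when $v\in S$ (when $v\in\overline S$ these configurations force $y=v\notin S$, so they never arise); and the small miracle that four structurally different double sums all collapse to the two target values $2^{4m}\pm(-1)^m2^{2m}$ is precisely what the bespoke identities of Theorem~\ref{thm-z-3} were arranged to deliver.
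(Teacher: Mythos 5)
Your proposal is correct and follows essentially the same route as the paper: it computes $|S_0\cup S_1|=2^{4m+1}+(-1)^m2^{2m}-1$ via Lemma~\ref{lem-z-7}(g) and then feeds Lemmas~\ref{lem-z-11} and~\ref{lem-z-12} into the criterion of Theorem~\ref{cor-z-3}, exactly as the paper does. The extra remarks on the excluded degenerate pairs and on connectivity are accurate but not needed beyond what the cited lemmas already supply.
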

Since $\overline{Cay(Z_2^{4m+2},S_0\cup S_1)}=Cay(Z_2^{4m+2},S_2\cup S_3)$, we have the following result.
\begin{cor}\label{cor-z-5}
The orbit Cayley graph $Cay(Z_2^{4m+2},S_2\cup S_3)$ is strongly regular with parameter
\[\left(2^{4m+2},2^{4m+1}-(-1)^m2^{2m},2^{4m}-(-1)^m2^{2m},2^{4m}-(-1)^m2^{2m}\right)\]
for any positive integer $m$.
\end{cor}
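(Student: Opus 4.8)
The plan is to derive this corollary purely by complementation from Theorem \ref{thm-z-6}, without any further combinatorial counting. First I would observe that, since the nonzero elements of $Z_2^{4m+2}$ are partitioned by weight modulo $4$ as $Z_2^{4m+2}\setminus\{\mathbf{0}\}=S_0\cup S_1\cup S_2\cup S_3$, the complementary connection set of $S_0\cup S_1$ is exactly $\overline{S_0\cup S_1}=S_2\cup S_3$. Because the complement of any Cayley graph $Cay(G,S)$ is $Cay(G,\overline{S})$, this yields the identity $\overline{Cay(Z_2^{4m+2},S_0\cup S_1)}=Cay(Z_2^{4m+2},S_2\cup S_3)$ already recorded just before the statement.

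Next I would confirm that the strongly regular graph $Cay(Z_2^{4m+2},S_0\cup S_1)$ of Theorem \ref{thm-z-6} is non-trivial, so that its complement is again strongly regular. This amounts to checking that the complement $Cay(Z_2^{4m+2},S_2\cup S_3)$ is connected; since $S_2\cup S_3$ contains the orbit $O_3$ and $3$ is odd, connectivity follows from Corollary \ref{cor-z-1}. Hence the complementation rule for non-trivial strongly regular graphs recorded before Theorem \ref{thm-z-6}, namely that the complement of an $(n,r,\lambda,\mu)$-strongly regular graph is $(n,n-1-r,n-2-2r+\mu,n-2r+\lambda)$-strongly regular, is applicable here.

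Finally I would substitute the parameters $n=2^{4m+2}$, $r=2^{4m+1}+(-1)^m2^{2m}-1$, $\lambda=2^{4m}+(-1)^m2^{2m}-2$ and $\mu=2^{4m}+(-1)^m2^{2m}$ from Theorem \ref{thm-z-6} into that rule. The new degree becomes $n-1-r=2^{4m+1}-(-1)^m2^{2m}$; the two remaining entries $n-2-2r+\mu$ and $n-2r+\lambda$ both collapse to $2^{4m}-(-1)^m2^{2m}$ once the leading terms $2^{4m+2}$ cancel and the $(-1)^m2^{2m}$ contributions combine, which reproduces the stated quadruple. The only step requiring any care is this arithmetic simplification; since it is a direct computation rather than a new counting argument, I expect no genuine obstacle, and the brief verification that the graph is non-trivial is the sole conceptual point worth keeping in mind.
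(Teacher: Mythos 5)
Your proposal is correct and follows exactly the paper's own route: the paper derives Corollary \ref{cor-z-5} in one line by applying the complementation rule for non-trivial strongly regular graphs to the parameters of Theorem \ref{thm-z-6}, using $\overline{Cay(Z_2^{4m+2},S_0\cup S_1)}=Cay(Z_2^{4m+2},S_2\cup S_3)$. Your explicit check of non-triviality via Corollary \ref{cor-z-1} is a small, welcome addition that the paper leaves implicit, and your parameter arithmetic is accurate.
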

\begin{lem}\label{lem-z-13}
Let $v$ be an element of $Z_2^{4m+2}$. If $v\in S_0\cup S_3$, then
\[|C(v,S_0\cup S_3)|=2^{4m}-(-1)^m2^{2m}-2.\]
\end{lem}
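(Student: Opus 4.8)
The plan is to follow the same two-case template used in Lemmas \ref{lem-z-9}--\ref{lem-z-12}: count the ordered pairs $(x,y)$ with $x+y=v$ and $x,y\in S_0\cup S_3$, split them according to which of $S_0,S_3$ each of $x,y$ lies in, parametrize every admissible decomposition by the overlap $i=v^Tx$, and finally collapse the resulting double binomial sums onto one of the identities of Theorem \ref{thm-z-3}. Writing $v=x+y$ additively we have $y=v+x$ and $|y|=|v|+|x|-2i$, where $i=v^Tx$ counts the common $1$'s of $v$ and $x$; a vector $x$ with prescribed weight and overlap $i$ is then chosen in $\binom{|v|}{i}\binom{4m+2-|v|}{|x|-i}$ ways.

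First I would take $v\in S_0$, say $|v|=4k$. A congruence check on $|y|\bmod 4$ shows that only the decompositions with $x,y\in S_0$ or $x,y\in S_3$ can occur, and in both the overlap is forced to be even, $i=2j$. The $S_0$-part contributes $\sum_{t}\sum_{j}\binom{4k}{2j}\binom{4m-4k+2}{4t-2j}$ and the $S_3$-part contributes $\sum_{t}\sum_{j}\binom{4k}{2j}\binom{4m-4k+2}{4t-2j+3}$. The decisive manoeuvre is to reindex the second sum by $t\mapsto t-1$, which turns its lower index $4t-2j+3$ into $4t-2j-1$; then Pascal's rule $\binom{N}{u}+\binom{N}{u-1}=\binom{N+1}{u}$ with $N=4m-4k+2$ merges the two sums into $\sum_{t}\sum_{j}\binom{4k}{2j}\binom{4m-4k+3}{4t-2j}$, which is precisely the left-hand side of Theorem \ref{thm-z-3}(viii) and evaluates to $2^{4m}-(-1)^m2^{2m}$.

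Next, for $v\in S_3$, say $|v|=4k+3$, the same congruence check leaves only the two symmetric decompositions $x\in S_0,\ y\in S_3$ and $x\in S_3,\ y\in S_0$, each with even overlap $i=2j$ and each contributing $\sum_{t}\sum_{j}\binom{4k+3}{2j}\binom{4m-4k-1}{4t-2j}$. Doubling and invoking Theorem \ref{thm-z-3}(ix) again gives $2^{4m}-(-1)^m2^{2m}$. It then remains to subtract the degenerate pairs in which $x$ or $y$ is $\mathbf{0}$: for $v\in S_0$ the two pairs $(\mathbf{0},v)$ and $(v,\mathbf{0})$ are silently included in the $S_0$-count, so two are removed; for $v\in S_3$ each of the two symmetric sums carries exactly one such pair, again removing two in total. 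In both cases this yields $|C(v,S_0\cup S_3)|=2^{4m}-(-1)^m2^{2m}-2$.

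The genuinely delicate point is the $t\mapsto t-1$ shift in the case $v\in S_0$: one must verify that the $S_3$-sum differs from the $S_0$-sum by exactly one Pascal step, i.e.\ by a shift of $3=4-1$ in the lower index (this is why the residues $0$ and $3\bmod 4$ pair up, just as $0$ and $1$ did in Lemma \ref{lem-z-9}), and that extending the summation range across the now-vanishing boundary binomials introduces no extra terms. Both are routine and are handled exactly as in the proof of Theorem \ref{thm-z-2}; everything else is a matter of the congruence bookkeeping and the degeneracy count described above.
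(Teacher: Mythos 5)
Your proposal is correct and follows essentially the same route as the paper: the same case split on $v\in S_0$ versus $v\in S_3$, the same parametrization by the overlap $i=v^Tx$ leading to the double binomial sums, the same reduction to Theorem \ref{thm-z-3} (viii) and (ix), and the same accounting for the two degenerate pairs involving $\mathbf{0}$. The only difference is that you spell out the $t\mapsto t-1$ reindexing and Pascal merge that the paper performs silently, which is a welcome clarification rather than a deviation.
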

\begin{proof}
If $v\in S_0$, one can verify that there are $\sum_{t=0}^m\sum_{j=0}^{2k}{4k\choose 2j}{4m-4k+2\choose 4t-2j}-2$ pairs of $(x,y)$ such that $v=x+y$ and $x,y\in S_0$, and there are $\sum_{t=0}^m\sum_{j=0}^{2k}{4k\choose 2j}{4m-4k+2\choose 4t+3-2j}$ pairs of $(x,y)$ such that $v=x+y$ and $x,y\in S_3$. Therefore, by Theorem \ref{thm-z-3} (viii), we have
\[\begin{array}{lll}
|C(v,S_0\cup S_3)|&=&\displaystyle\sum_{t=0}^m\sum_{j=0}^{2k}{4k\choose 2j}{4m-4k+2\choose 4t-2j}+\sum_{t=0}^m\sum_{j=0}^{2k}{4k\choose 2j}{4m-4k+2\choose 4t-2j+3}-2\\
&=&\displaystyle\sum_{t=0}^m\sum_{j=0}^{2k}{4k\choose 2j}{4m-4k+3\choose 4t-2j}-2\\
&=&2^{4m}-(-1)^m2^{2m}-2.
\end{array}\]

If $v\in S_3$, one can verify that there are $\sum_{t=0}^m\sum_{j=0}^{2k+1}{4k+3\choose 2j}{4m-4k-1\choose 4t-2j}-1$ pairs of $(x,y)$ such that $v=x+y$ and $x\in S_0$, $y\in S_3$, and there are also $\sum_{t=0}^m\sum_{j=0}^{2k+1}{4k+3\choose 2j}{4m-4k-1\choose 4t-2j}-1$ pairs of $(x,y)$ such that $v=x+y$ and $x\in S_3$, $y\in S_0$. Therefore, by Theorem \ref{thm-z-3} (ix), we have
\[|C(v,S_0\cup S_3)|=2\sum_{t=0}^m\sum_{j=0}^{2k+1}{4k+3\choose 2j}{4m-4k-1\choose 4t-2j}-2=2^{4m}-(-1)^m2^{2m}-2.\]

It completes the proof.
\end{proof}
\begin{lem}\label{lem-z-14}
Let $v$ be an element of $Z_2^{4m+2}$. If $v\in \overline{S_0\cup S_3}=S_1\cup S_2$, then
\[|C(v,S_0\cup S_3)|=2^{4m}-(-1)^m2^{2m}.\]
\end{lem}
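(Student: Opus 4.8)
The plan is to follow the same two-step template used in Lemmas \ref{lem-z-9}--\ref{lem-z-13}, of which Lemma \ref{lem-z-13} is the companion statement for the complementary set of vertices. First I would translate the counting of pairs $(x,y)\in C(v,S_0\cup S_3)$ with $v=x+y$ into double sums of products of binomial coefficients indexed by the overlap $i=v^Tx$, and then collapse those sums to the closed form $2^{4m}-(-1)^m2^{2m}$ by invoking the appropriate identities from Theorem \ref{thm-z-3}. The natural split is into the two cases $v\in S_1$ and $v\in S_2$ according to whether $|v|\equiv 1$ or $2\pmod 4$. Throughout I will use that $v=x+y$ forces $y=v+x$ and hence $|y|=|v|+|x|-2i$, and that each $x$ of prescribed weight and prescribed overlap $i$ with $v$ is chosen by selecting the overlapping ones among the ones of $v$ and the remaining ones among its zeros.

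For $v\in S_1$, write $|v|=4k+1$, so $v$ has $4m-4k+1$ zeros. A residue check modulo $4$ applied to $|y|=|v|+|x|-2i$ shows that $x,y$ cannot both lie in $S_0$ nor both in $S_3$, leaving only $x\in S_0,\ y\in S_3$ and its mirror $x\in S_3,\ y\in S_0$. Fixing $|x|=4t$ and imposing $|y|\equiv 3\pmod 4$ forces $i$ odd, say $i=2j+1$, so the number of admissible $x$ for each $(t,j)$ is $\binom{4k+1}{2j+1}\binom{4m-4k+1}{4t-2j-1}$. The vector $\mathbf 0$ is automatically excluded, since it has even overlap, so no correction term appears. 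Summing over the full ranges $0\le t\le m$, $0\le j\le 2k$ (the binomials vanish outside the admissible range) and doubling for the symmetric configuration gives $2\sum_{t=0}^m\sum_{j=0}^{2k}\binom{4k+1}{2j+1}\binom{4m-4k+1}{4t-2j-1}$, which equals $2^{4m}-(-1)^m2^{2m}$ by Theorem \ref{thm-z-3}(x).

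For $v\in S_2$, write $|v|=4k+2$, so $v$ has $4m-4k$ zeros. Now the residue analysis rules out the mixed configuration $\{x\in S_0,\ y\in S_3\}$ and leaves the two configurations $x,y\in S_0$ and $x,y\in S_3$; in both the overlap turns out to be odd, $i=2j+1$, yielding per-pair counts $\binom{4k+2}{2j+1}\binom{4m-4k}{4t-2j-1}$ and $\binom{4k+2}{2j+1}\binom{4m-4k}{4t-2j+2}$ respectively (again $\mathbf 0$ cannot occur). The main technical point is assembling these two sums into the shape required by Theorem \ref{thm-z-3}(xi). I would first apply the Pascal relation $\binom{4m-4k+1}{4t-2j+3}=\binom{4m-4k}{4t-2j+3}+\binom{4m-4k}{4t-2j+2}$, and then note that the shift $t\mapsto t+1$ gives $\sum_{t}\binom{4m-4k}{4t-2j-1}=\sum_{t}\binom{4m-4k}{4t-2j+3}$, because the only boundary terms created by the shift are $\binom{4m-4k}{-2j-1}$ and $\binom{4m-4k}{4m+3-2j}$, both of which vanish. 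This identifies the total with $\sum_{t=0}^m\sum_{j=0}^{2k}\binom{4k+2}{2j+1}\binom{4m-4k+1}{4t-2j+3}=2^{4m}-(-1)^m2^{2m}$.

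Both cases thus produce the claimed value, completing the proof. The step I expect to be the real obstacle is the bookkeeping in the case $v\in S_2$: correctly pinning down that $i$ is odd in \emph{both} subcases, and then matching the two resulting double sums to identity (xi) via the Pascal relation together with the shift-reindexing of the summation variable $t$. The case $v\in S_1$, by contrast, should fall out immediately from the symmetry of $x$ and $y$ and a single application of identity (x).
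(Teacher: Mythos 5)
Your proposal is correct and follows essentially the same route as the paper: the same split into $v\in S_1$ (handled by doubling the mixed-configuration count and applying Theorem \ref{thm-z-3}(x)) and $v\in S_2$ (the two same-class counts combined into the form of Theorem \ref{thm-z-3}(xi)). The only difference is that you spell out the Pascal relation and the $t\mapsto t+1$ reindexing that merge the two sums in the $S_2$ case, which the paper leaves implicit; your boundary-term check there is accurate.
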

\begin{proof}
If $v\in S_1$, there are $\sum_{t=0}^m\sum_{j=0}^{2k}{4k+1\choose 2j+1}{4m-4k+1\choose 4t-2j-1}$ pairs of $(x,y)$ such that $v=x+y$ and $x\in S_0$, $y\in S_3$, and there are also $\sum_{t=0}^m\sum_{j=0}^{2k}{4k+1\choose 2j+1}{4m-4k+1\choose 4t-2j-1}$ pairs of $(x,y)$ such that $v=x+y$ and $x\in S_3$, $y\in S_0$. Therefore, by Theorem \ref{thm-z-3} (x), we have
\[|C(v,S_0\cup S_3)|=2\sum_{t=0}^m\sum_{j=0}^{2k}{4k+1\choose 2j+1}{4m-4k+1\choose 4t-2j-1}=2^{4m}-(-1)^m2^{2m}.\]

If $v\in S_2$, there are $\sum_{t=0}^m\sum_{j=0}^{2k}{4k+2\choose 2j+1}{4m-4k\choose 4t-2j-1}$ pairs of $(x,y)$ such that $v=x+y$ and $x,y\in S_0$, and there are $\sum_{t=0}^m\sum_{j=0}^{2k}{4k+2\choose 2j+1}{4m-4k\choose 4t-2j+2}$ pairs of $(x,y)$ such that $v=x+y$ and $x,y\in S_3$. Therefore, by Theorem \ref{thm-z-3} (xi), we have
\[\begin{array}{lll}
|C(v,S_0\cup S_3)|&=&\displaystyle\sum_{t=0}^m\sum_{j=0}^{2k}{4k+2\choose 2j+1}{4m-4k\choose 4t-2j-1}+\sum_{t=0}^m\sum_{j=0}^{2k}{4k+2\choose 2j+1}{4m-4k\choose 4t-2j+2}\\
&=&\displaystyle\sum_{t=0}^m\sum_{j=0}^{2k}{4k+2\choose 2j+1}{4m-4k+1\choose 4t-2j+3}\\
&=&2^{4m}-(-1)^m2^{2m}.
\end{array}\]
It completes the proof.
\end{proof}

Note that, from Lemma \ref{lem-z-7} (f), $|S_0\cup S_3|=2^{4m+1}-(-1)^m2^{2m}-1$. Combining Lemmas \ref{lem-z-13}, \ref{lem-z-14} and Corollary \ref{cor-z-3}, we get the following result.
\begin{thm}\label{thm-z-7}
The orbit Cayley graph $Cay(Z_2^{4m+2},S_0\cup S_3)$ is strongly regular with parameter
\[\left(2^{4m+2},2^{4m+1}-(-1)^m2^{2m}-1,2^{4m}-(-1)^m2^{2m}-2,2^{4m}-(-1)^m2^{2m}\right)\]
for any positive integer $m$.
\end{thm}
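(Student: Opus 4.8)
The plan is to apply the strongly-regular criterion of Theorem \ref{cor-z-3} directly, taking $G=Z_2^{4m+2}$ and $S=S_0\cup S_3$. That criterion reduces the problem to showing that $|C(v,S)|$ takes one constant value, call it $\lambda$, for every $v\in S$, and a (possibly different) constant value $\mu$ for every $v\in\overline{S}$; the parameters of the graph are then $(|G|,|S|,\lambda,\mu)$. So the whole argument is a matter of assembling pieces that are already in hand.

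First I would pin down the partition of the ambient group. Since $S_0$ collects the orbits $O_i$ with $i\equiv 0\pmod 4$ and $i\ne 0$, while $S_3$ collects those with $i\equiv 3\pmod 4$, the complement $\overline{S}=Z_2^{4m+2}\setminus(\{\mathbf 0\}\cup S)$ is exactly the union of the orbits with $i\equiv 1$ or $i\equiv 2\pmod 4$, that is, $\overline{S}=S_1\cup S_2$. This identification is precisely what lets Lemmas \ref{lem-z-13} and \ref{lem-z-14} together cover all of $G\setminus\{\mathbf 0\}$.

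Next I would read off the two constants. Lemma \ref{lem-z-13} supplies $|C(v,S_0\cup S_3)|=2^{4m}-(-1)^m2^{2m}-2$ for every $v\in S_0\cup S_3=S$, so $\lambda=2^{4m}-(-1)^m2^{2m}-2$; Lemma \ref{lem-z-14} supplies $|C(v,S_0\cup S_3)|=2^{4m}-(-1)^m2^{2m}$ for every $v\in S_1\cup S_2=\overline{S}$, so $\mu=2^{4m}-(-1)^m2^{2m}$. The degree $r=|S|$ is the remaining input: summing the relevant binomial counts and invoking Lemma \ref{lem-z-7}(f) gives $|S_0\cup S_3|=2^{4m+1}-(-1)^m2^{2m}-1$. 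With $|G|=2^{4m+2}$, the criterion of Theorem \ref{cor-z-3} then yields precisely the claimed parameter tuple.

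The genuinely hard work has already been carried out upstream: the delicate step is the pair of counting arguments in Lemmas \ref{lem-z-13} and \ref{lem-z-14}, where one splits each pair $(x,y)$ with $v=x+y$ according to whether $x,y$ lie in $S_0$ or in $S_3$, parametrizes the overlap $i=v^Tx$ by its parity, and reduces the count to the double binomial sums evaluated in Theorem \ref{thm-z-3}(viii)--(xi). Given those lemmas, the remaining obstacle is merely bookkeeping: ensuring the off-by-one corrections (the $-2$ from excluding $(\mathbf 0,v)$ and $(v,\mathbf 0)$, and the factor $2$ from the symmetry of $x$ and $y$) are applied consistently so that the constants agree across all residue classes. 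No further case analysis is required, and the theorem follows at once from Theorem \ref{cor-z-3}.
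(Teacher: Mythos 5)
Your proposal matches the paper's own proof essentially verbatim: the paper likewise obtains $\lambda$ and $\mu$ from Lemmas \ref{lem-z-13} and \ref{lem-z-14}, computes $|S_0\cup S_3|=2^{4m+1}-(-1)^m2^{2m}-1$ via Lemma \ref{lem-z-7}, and concludes by the criterion of Theorem \ref{cor-z-3}. No gaps; the argument is correct and identical in structure to the paper's.
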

Since $\overline{Cay(Z_2^{4m+2},S_0\cup S_3)}=Cay(Z_2^{4m+2},S_1\cup S_2)$, we have the following result.
\begin{cor}\label{cor-z-6}
The orbit Cayley graph $Cay(Z_2^{4m+2},S_1\cup S_2)$ is strongly regular with parameter
\[\left(2^{4m+2},2^{4m+1}+(-1)^m2^{2m},2^{4m}+(-1)^m2^{2m},2^{4m}+(-1)^m2^{2m}\right)\]
for any positive integer $m$.
\end{cor}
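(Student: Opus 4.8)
The plan is to obtain this corollary as an immediate consequence of Theorem~\ref{thm-z-7} via complementation, exactly as Corollaries~\ref{cor-z-4} and \ref{cor-z-5} were derived from their parent statements. First I would record the set-theoretic identity underlying the whole section: the four sets $S_0,S_1,S_2,S_3$ partition the nonzero vectors of $Z_2^{4m+2}$ according to the residue of the weight modulo $4$. Hence $S_1\cup S_2=Z_2^{4m+2}\setminus(\{\mathbf 0\}\cup(S_0\cup S_3))=\overline{S_0\cup S_3}$, and since the complement of a Cayley graph $Cay(G,S)$ is $Cay(G,\overline S)$, this gives $\overline{Cay(Z_2^{4m+2},S_0\cup S_3)}=Cay(Z_2^{4m+2},S_1\cup S_2)$.

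Next I would invoke the standard relation between the parameters of a non-trivial strongly regular graph and those of its complement, namely that an $(n,r,\lambda,\mu)$-strongly regular graph has complement $(n,n-1-r,n-2-2r+\mu,n-2r+\lambda)$-strongly regular. Before applying it I would confirm that Theorem~\ref{thm-z-7} produces a genuinely non-trivial strongly regular graph, i.e. that both $Cay(Z_2^{4m+2},S_0\cup S_3)$ and its complement are connected; this follows from Corollary~\ref{cor-z-1}, since $S_0\cup S_3$ contains the odd orbit $O_3$ while $S_1\cup S_2$ contains the odd orbit $O_1$. With the hypotheses of the complement formula verified, I would substitute $r=2^{4m+1}-(-1)^m2^{2m}-1$, $\lambda=2^{4m}-(-1)^m2^{2m}-2$ and $\mu=2^{4m}-(-1)^m2^{2m}$ from Theorem~\ref{thm-z-7}, together with $n=2^{4m+2}$.

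The only remaining work is the arithmetic simplification, which I expect to be entirely routine rather than a genuine obstacle. Computing $n-1-r=2^{4m+1}+(-1)^m2^{2m}$ gives the asserted degree; the two expressions $n-2-2r+\mu$ and $n-2r+\lambda$ both collapse to $2^{4m}+(-1)^m2^{2m}$ (the leading $2^{4m+2}$ cancels against $2r$, the additive constants cancel, and the two copies of $(-1)^m2^{2m}$ contributed by $-2r$ combine with the single copy carried by $\mu$ or $\lambda$ to leave $+(-1)^m2^{2m}$), matching the claimed $\lambda$ and $\mu$. This yields the stated parameter quadruple and completes the derivation. A self-contained alternative would be to mimic Lemmas~\ref{lem-z-13} and \ref{lem-z-14} and count $|C(v,S_1\cup S_2)|$ directly for $v\in S_1\cup S_2$ and for $v\in S_0\cup S_3$ via Theorem~\ref{cor-z-3}, but that would force me to re-derive the relevant sum identities and is strictly more laborious than complementation; hence the complement argument is the route I would take.
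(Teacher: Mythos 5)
Your proposal is correct and follows exactly the paper's route: the paper obtains Corollary~\ref{cor-z-6} by observing $\overline{Cay(Z_2^{4m+2},S_0\cup S_3)}=Cay(Z_2^{4m+2},S_1\cup S_2)$ and applying the standard complement-parameter formula to Theorem~\ref{thm-z-7}. Your arithmetic checks out, and the extra verification of non-triviality via Corollary~\ref{cor-z-1} is a small but sound addition that the paper leaves implicit.
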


\section{Conclusion}
In this paper, we have constructed six families of infinite  non-trivial strongly regular graphs from the general linear group $GL(n,F_2)$, see Theorems \ref{thm-z-5}, \ref{thm-z-6}, \ref{thm-z-7} and Corollaries \ref{cor-z-4}, \ref{cor-z-5} and \ref{cor-z-6}. We collect them in Table \ref{tab-1}, and especially for $m=1$ and $2$, we present these graphs in Table \ref{tab-2}. Note that Brouwer \url{http://www.win.tue.nl/~aeb/graphs/srg/srgtab.html} has listed all strongly regular graphs on at most $1300$ vertices and thus the $12$ graphs in Table \ref{tab-2} are  contained in his collection. However, as we know, the six infinite families are new.

\begin{table}[htp]
\caption{The non-trivial strongly regular orbit Cayley graphs we have obtained.}
\begin{center}\setlength{\tabcolsep}{1mm}{{\footnotesize
\begin{tabular}{ccccc}
\hline
Graphs & $n$& $r$& $\lambda$ & $\mu$\\
\hline
$Cay(Z_2^{4m},S_0\cup S_1)$ & $2^{4m}$& $2^{4m-1}+(-1)^m2^{2m-1}-1$& $2^{4m-2}+(-1)^m2^{2m-1}-2$ & $2^{4m-2}+(-1)^m2^{2m-1}$\\
$Cay(Z_2^{4m},S_2\cup S_3)$ & $2^{4m}$& $2^{4m-1}-(-1)^m2^{2m-1}$&  $2^{4m-2}-(-1)^m2^{2m-1}$ & $2^{4m-2}-(-1)^m2^{2m-1}$\\
$Cay(Z_2^{4m+2},S_0\cup S_1)$ & $2^{4m+2}$& $2^{4m+1}+(-1)^m2^{2m}-1$& $2^{4m}+(-1)^m2^{2m}-2$ & $2^{4m}+(-1)^m2^{2m}$\\
$Cay(Z_2^{4m+2},S_2\cup S_3)$ & $2^{4m+2}$& $2^{4m+1}-(-1)^m2^{2m}$& $2^{4m}-(-1)^m2^{2m}$ & $2^{4m}-(-1)^m2^{2m}$\\
$Cay(Z_2^{4m+2},S_1\cup S_2)$ & $2^{4m+2}$& $2^{4m+1}+(-1)^m2^{2m}$& $2^{4m}+(-1)^m2^{2m}$ & $2^{4m}+(-1)^m2^{2m}$\\
$Cay(Z_2^{4m+2},S_0\cup S_3)$ & $2^{4m+2}$& $2^{4m+1}-(-1)^m2^{2m}-1$& $2^{4m}-(-1)^m2^{2m}-2$ & $2^{4m}-(-1)^m2^{2m}$\\
\hline
\end{tabular}}}
\end{center}
\label{tab-1}
\end{table}%

\begin{table}[htp]
\caption{The non-trivial strongly regular orbit Cayley graphs for $m=1$ and $2$.}
\begin{center}{\footnotesize
\begin{tabular}{ccccc||ccccc}
\hline
Graphs & $n$& $r$& $\lambda$ & $\mu$&Graphs & $n$& $r$& $\lambda$ & $\mu$ \\
\hline
$Cay(Z_2^{4},S_0\cup S_1)$ & $2^4$& $5$& $0$ & $2$&$Cay(Z_2^{8},S_0\cup S_1)$ & $2^8$& $135$& $70$ & $72$\\
$Cay(Z_2^{4},S_2\cup S_3)$ & $2^{4}$& $10$&  $6$ & $6$&$Cay(Z_2^{8},S_2\cup S_3)$ & $2^{8}$& $120$&  $56$ & $56$\\
$Cay(Z_2^{6},S_0\cup S_1)$ & $2^6$& $27$& $10$ & $12$&$Cay(Z_2^{10},S_0\cup S_1)$ & $2^{10}$& $527$& $270$ & $272$ \\
$Cay(Z_2^{6},S_2\cup S_3)$ & $2^{6}$& $36$& $20$ & $20$&$Cay(Z_2^{10},S_2\cup S_3)$ & $2^{10}$& $496$& $240$ & $240$\\
$Cay(Z_2^{6},S_1\cup S_2)$ & $2^{6}$& $28$& $12$ & $12$&$Cay(Z_2^{10},S_1\cup S_2)$ & $2^{10}$& $528$& $272$ & $272$\\
$Cay(Z_2^{6},S_0\cup S_3)$ & $2^{6}$& $35$& $18$ & $20$&$Cay(Z_2^{10},S_0\cup S_3)$ & $2^{10}$& $495$& $238$ & $240$\\
\hline
\end{tabular}}
\end{center}
\label{tab-2}
\end{table}

It is clear that if a Cayley graph is arc-transitive with diameter $2$ then it must be strongly regular. However, it is always not easy to verify whether a Cayley graph is arc-transitive. In Theorem \ref{cor-z-3}, we present a simpler criterion for a Cayley graph to be strongly regular, which is also applicative for non arc-transitive Cayley graphs. Note that the graph $Cay(Z_2^4,S_0\cup S_1)$ is just the famous Clebsch graph, which is arc-transitive, and $Cay(Z_2^4, S_0\cup S_1)$ is just the special case for $m=1$ in the family of $Cay(Z_2^{4m},S_0\cup S_1)$. It is natural to ask whether $Cay(Z_2^{4m},S_0\cup S_1)$ is arc-transitive for any positive integer $m$. More general, we pose the following problem.
\begin{prob}
Does all strongly regular graphs in the six families are arc-transitive ?
\end{prob}
The automorphism group of a graph always reflects some combinatoric properties of this graph. Therefore, many mathematicians would like to investigate the automorphism groups of graphs, especially the automorphism groups of Cayley graphs, see \cite{Feng-1} and reference therein. Note that the automorphism group of the Clebsch graph $Cay(Z_2^4,S_0\cup S_1)$ is isomorphic to the Coxeter group $D_5$. In fact, every isomorphism between two connected induced subgraphs of the Clebsch graph can be extended to an automorphism of it. The following problem is proposed.
\begin{prob}
Determine the automorphism groups of the strongly regular graphs in the six families.
\end{prob}

\end{document}